\documentclass[11pt]{article}

\usepackage[T1]{fontenc}
\usepackage{lmodern}
\usepackage{float}
\usepackage{amsmath, amssymb, amsthm}
\usepackage[margin=2cm]{geometry}
\usepackage{color, graphicx}
\usepackage{multirow}
\usepackage{parskip}
\usepackage{tikz}
\usetikzlibrary{arrows.meta,calc}
\usepackage{subcaption}
\usepackage{float}

\usepackage{etoolbox}
\makeatletter
\patchcmd{\@maketitle}{\LARGE \@title}{\LARGE\bfseries\@title}{}{}

\renewcommand{\@seccntformat}[1]{\csname the#1\endcsname.\quad}
\makeatother

\definecolor{darkblue}{rgb}{0,0,.5}

\usepackage{hyperref}
\hypersetup{
	colorlinks=true,		% false: boxed links; true: colored links
	linkcolor=darkblue,		% color of internal links
	citecolor=darkblue,		% color of links to bibliography
	urlcolor=darkblue 		% color of external links
}

\makeatletter
\def\th@plain{%
	\thm@notefont{}% same as heading font
	\itshape % body font
}
\def\th@definition{%
	\thm@notefont{}% same as heading font
	\normalfont % body font
}

\renewenvironment{proof}[1][\proofname]{\par
	\normalfont
	\topsep0\p@\@plus3\p@ \trivlist
	\item[\hskip\labelsep\itshape
	#1\@addpunct{.}]\ignorespaces
}{%
	\qed\endtrivlist
}
\makeatother

\usepackage[capitalize, nameinlink, noabbrev]{cleveref}

\newtheorem{theorem}{Theorem}[section]
\newtheorem{lemma}[theorem]{Lemma}
\newtheorem{corollary}[theorem]{Corollary}
\newtheorem{proposition}[theorem]{Proposition}
\theoremstyle{definition}
\newtheorem{definition}[theorem]{Definition}

\newtheorem{remark}[theorem]{Remark}

\newcommand{\loc}{\mathrm{loc}}

\usepackage[shortlabels]{enumitem}

\usepackage{empheq}
\definecolor{myblue}{rgb}{.8, .8, 1}

\usepackage[numbers,sort&compress]{natbib}% Citation support using natbib.sty

\bibpunct[, ]{[}{]}{,}{n}{,}{,}% Citation support using natbib.sty
\usepackage[numbers,sort&compress]{natbib}% Citation support using natbib.sty
\bibpunct[, ]{[}{]}{,}{n}{,}{,}% Citation support using natbib.sty
\newcommand{\R}{\mathbb{R}}

\newcommand{\B}{\mathbb{B}}

\newcommand{\dH}{\mathbf{d}_H}
\newcommand{\dist}{\mathbf{d}}

\theoremstyle{plain}

\title{Equilibria in Motion:\\
Stability, Tracking, and Convergence}
\author{
Hassan Saoud\thanks{Department of Mathematics and Natural Sciences, Gulf University for Science and Technology, P.O. Box 7207, Hawally 32093, Kuwait. Email: \texttt{saoud.h@gust.edu.kw}.}
}

\begin{document}
\date{}
\maketitle
\abstract{
We study the stability, tracking, and convergence of nonautonomous systems with moving nonisolated equilibrium sets. We develop a Lyapunov framework based on coupled dissipation channels to analyze the evolution of trajectories relative to a moving equilibrium family whose motion is quantified by an equilibrium speed through localized Attouch--Wets estimates. Under suitable dissipation and energy--distance comparison conditions, we establish Lyapunov stability, quantitative tracking estimates, asymptotic tracking under integrable equilibrium motion, and input-to-state stability (ISS) estimates relative to the moving equilibrium family. We further show that, under an appropriate nonescape condition, integrable equilibrium speed guarantees the existence of a limiting equilibrium geometry in the Attouch--Wets sense, allowing convergence to the moving equilibrium family to be transferred to convergence relative to the limiting equilibrium set. Explicit convergence-rate estimates are also derived. The theoretical results are illustrated by a dynamic resource allocation model with time-varying demand.
}

\vspace{0.5cm}
%%%%%%%%%%%%%%%%%%%%%%%%%%%%%
{\small
\noindent{\bfseries Keywords:} Nonautonomous systems;
Moving equilibrium sets;
Nonisolated equilibria;
Lyapunov stability;
Tracking;
Input-to-state stability;
Attouch--Wets convergence;
Resource allocation.} 

\noindent{\bfseries AMS Subject Classifications:} 34D20, 34D05, 93D05, 93D09, 34C11

%%%%%%%%%%%%%%%%%%%%%%%%%%%%%
\section{Introduction}
\label{sec:Introduction}
%%%%%%%%%%%%%%%%%%%%%%%
The analysis of asymptotic behavior in nonlinear dynamical systems is a central topic in control theory, optimization, and applied mathematics. Classical Lyapunov methods provide a powerful framework for establishing stability and convergence properties of trajectories, while extensions based on invariance principles, stability of sets, and robustness concepts have considerably broadened their range of applicability \cite{khalil,BacciottiRosier,Haddad}. In many applications, however, the asymptotic behavior of interest is not governed by an isolated equilibrium point but rather by a continuum of equilibria or a more general invariant set. Such situations arise naturally in optimization dynamics, networked systems, economic adjustment processes, and distributed resource-allocation models, where the geometry of the equilibrium set plays a fundamental role in determining long-term behavior. More recently, nonautonomous dynamical systems and time-dependent attractors have also attracted considerable attention, reflecting the growing interest in systems whose asymptotic behavior evolves with time \cite{Longo2025}.
%%%%%%%%%%%%%%%%%%%%%%%%%%%%%%%

The study of stability relative to sets has led to the notions of semistability and pointwise asymptotic stability, which provide a natural framework for dynamical systems possessing nonisolated equilibria. These concepts have been extensively investigated in the context of differential equations, differential inclusions, and hybrid systems \cite{bhat99,bhat03,bhat10,goebel10,goebel18,saoud15,SaoudTheraDao2026}. In particular, pointwise asymptotic stability emphasizes convergence to a set rather than to a distinguished equilibrium point, while semistability describes convergence toward Lyapunov stable equilibria within a continuum of equilibrium states. Such notions have proved especially useful in situations where the equilibrium structure is intrinsically nonunique and provide a natural framework for studying more general settings in which the equilibrium geometry itself may evolve over time.
A related line of research concerns Lyapunov- and Matrosov-based approaches to stability and convergence, including extensions of LaSalle-type arguments and auxiliary-function techniques \cite{Matrosov1962,Loria2005,Mazenc2007,AstolfiPraly2011}. These methods have proved highly effective for establishing stability and asymptotic convergence toward equilibria and, more generally, invariant sets. They have also provided important tools for the analysis of systems with multiple equilibria, weak dissipation mechanisms, and explicit time dependence. In most of these developments, however, the geometric object relative to which convergence is studied remains fixed. The present work considers a complementary setting in which the equilibrium family itself evolves with time, thereby motivating the study of stability, tracking, and convergence relative to moving equilibrium sets. To the best of our knowledge, existing Lyapunov approaches do not provide a unified framework for quantitative tracking, robustness, convergence-rate analysis, and limiting equilibrium geometry for moving nonisolated equilibrium families.
%%%%%%%%%%%%%%%%%%%%%%%%%%%%%%

Motivated by these considerations, we consider nonautonomous systems whose equilibria form a moving family of nonisolated sets. Rather than studying convergence toward a fixed equilibrium or invariant set, our objective is to analyze the evolution of trajectories relative to a moving equilibrium geometry. The central idea is to quantify the motion of the equilibrium family through an equilibrium speed, defined in terms of localized Attouch--Wets estimates, and to combine this geometric information with Lyapunov methods to establish stability, tracking, convergence, and robustness properties. From this perspective, the equilibrium family is viewed as a dynamical object whose evolution directly influences the behavior of trajectories, thereby coupling the geometry of the equilibrium motion with the asymptotic dynamics of the system.
%%%%%%%%%%%%%%%%%%%%%%%%

The main contributions of this paper are as follows. We develop a Lyapunov framework for the analysis of nonautonomous systems with moving nonisolated equilibrium sets and establish quantitative estimates relating the Lyapunov energy to the distance from the evolving equilibrium family. Building upon this framework, we derive Lyapunov stability and explicit tracking estimates that quantify the deviation of trajectories from the moving equilibrium geometry. Under suitable integrability assumptions on the equilibrium speed, we establish asymptotic tracking together with explicit convergence-rate estimates that capture the combined effects of Lyapunov dissipation and equilibrium motion. We further obtain robustness results in the spirit of input-to-state stability (ISS) \cite{sontag1995}, providing quantitative bounds on the tracking error in the presence of external perturbations. Finally, we show that, under an appropriate nonescape condition, integrable equilibrium speed guarantees the existence of a limiting equilibrium geometry in the Attouch--Wets sense, thereby allowing convergence toward the moving equilibrium family to be transferred to convergence relative to the limiting equilibrium set.
%%%%%%%%%%%%%%%%%%%%%%%%%%%%

The present work is also related to a broader line of research on Lyapunov-based convergence analysis and stability relative to sets. The methodology developed here is inspired in part by the dissipation framework introduced in \cite{saoud2025}, where convergence is established through the combination of multiple decay mechanisms within a unified Lyapunov structure. In contrast to the autonomous setting considered there, the present paper focuses on moving equilibrium families and on the interaction between Lyapunov dissipation, equilibrium drift, and the geometric evolution of the underlying equilibrium sets.
%%%%%%%%%%%%%%%%%%%%%%%%

Moving equilibrium families arise naturally in a variety of applications. Examples include dynamic resource-allocation systems, network dynamics, and economic adjustment processes, where demand, constraints, or external parameters evolve continuously over time \cite{Arrow1958,Cherukuri2016}. Motivated by these applications, we illustrate the theory through a dynamic resource allocation model whose equilibrium set evolves according to a time-varying demand. Related economic models involving evolving constraints and adjustment mechanisms may also be found in \cite{Daher2006}.
%%%%%%%%%%%%%%%%%%%%%%%%%%%%%%%%%%%%%%%

The remainder of the paper is organized as follows. Section~\ref{sec:notation} introduces the notation, the moving equilibrium framework, and the standing assumptions. Section~\ref{sec:composite-analysis} develops the Lyapunov dissipation framework and establishes the basic energy estimates. Section~\ref{sec:stability-tracking} derives the main stability, tracking, and robustness results relative to moving equilibrium sets. Section~\ref{sec:rates-limiting-geometry} studies asymptotic tracking, convergence rates, and the emergence of a limiting equilibrium geometry. Section~\ref{sec:examples-applications} illustrates the theory through a scalar example and a dynamic resource allocation model with time-varying demand.  Section~\ref{sec:conclusion} concludes the paper and discusses some directions for future research.
%%%%%%%%%%%%%%%%%%%%%%%%%%%%%%%%%%%%%%%%%%%%%%%%%%%%%%%%%
%%%%%%%%%%%%%%%%%%%%%%%%%%%%%%%%%%%%%%%%%%%%%%%%%%%%%
\section{Notation and Framework}
\label{sec:notation}

Throughout this paper, $\R^n$ denotes the $n$-dimensional Euclidean space equipped with the inner product $\langle\cdot,\cdot\rangle$ and the induced norm $\|\cdot\|$. We denote by $\B(x,r)$ (resp.\ $\overline{\B}(x,r)$) the open (resp.\ closed) ball centered at $x\in\R^n$ with radius $r>0$. The unit ball is denoted by $\B:=\B(0,1)$.
For $1\le p<\infty$, $L^p_{\loc}([0,\infty))$ denotes the space of measurable functions $g:[0,\infty)\to\R$ such that $\displaystyle\int_0^T |g(t)|^p\,dt<\infty$ for every $T>0$. In particular, $L^1_{\loc}([0,\infty))$ denotes the space of locally integrable functions on $[0,\infty)$. We denote by $W^{1,1}_{\loc}([0,\infty))$ the space of locally absolutely continuous functions $g:[0,\infty)\to\R$ such that $\dot g\in L^1_{\loc}([0,\infty))$. The space $L^1(0,\infty)$ consists of all measurable functions $g:[0,\infty)\to\R$ satisfying $\displaystyle\int_0^\infty |g(t)|\,dt<\infty$, while $L^\infty(I)$ denotes the space of essentially bounded measurable functions on an interval $I\subset[0,\infty)$. For $g\in L^\infty(I)$, we define $\|g\|_{L^\infty(I)}:=\operatorname*{ess\,sup}_{t\in I}|g(t)|$. When $I=(0,\infty)$, we simply write $\|g\|_\infty:=\|g\|_{L^\infty(0,\infty)}$.

Given a nonempty closed set $S\subset\R^n$, the associated distance function is defined by
$\dist(x,S):=\inf_{y\in S}\|x-y\|$ for $x\in\R^n$.
For two nonempty closed sets $A,B\subset\R^n$, the Hausdorff--Pompeiu distance is given by
\[
\dH(A,B)
:=
\max\Big\{
\sup_{a\in A}\dist(a,B),
\;
\sup_{b\in B}\dist(b,A)
\Big\}.
\]
%%%%%%%%%%%%%%%%%%%%%%%%%%%%%%%%
%%%%%%%%%%%%%%%%%%%%%%%
We consider the nonautonomous system
\begin{equation}\tag{P}\label{P}
\dot x(t)=f(t,x(t)),\qquad t\ge t_0,\qquad x(t_0)=x_0\in\R^n.
\end{equation}
where, for every $x\in\R^n$, the mapping
$t\mapsto f(t,x)$ is measurable and,
for almost every $t\ge t_0$, the mapping
$x\mapsto f(t,x)$ is locally Lipschitz continuous.
Under these assumptions, the Cauchy problem \eqref{P}
admits a unique local solution through each initial condition.
Throughout the paper, we consider solutions that are defined on $[t_0,\infty)$. For background on ordinary differential equations and nonlinear dynamical systems, see, e.g., \cite{khalil,Haddad}.

For each $t\ge t_0$, we associate with \eqref{P} the equilibrium set
\[
E(t):=\{x\in\R^n:f(t,x)=0\}.
\]
Throughout the paper, we assume that $E(t)\neq\emptyset$ for every
$t\ge t_0$. Given a solution $x(\cdot)$ of \eqref{P}, we define the
\emph{tracking error}
\begin{equation}\tag{TE}\label{eq:tracking-error}
\dist(t):=\dist(x(t),E(t)).
\end{equation}

The main structural assumption concerns the evolution of the equilibrium
family. For nonempty closed sets $A,B\subset\R^n$ and $R>0$, define
\[
d_R^{\rm AW}(A,B):=
\sup_{z\in\overline{\B}(0,R)}
\bigl|\dist(z,A)-\dist(z,B)\bigr|.
\]
We assume that there exists a nonnegative function
$v_E\in L^1_{\loc}([t_0,\infty))$ such that, for every $R>0$,
\begin{equation}\tag{HD}\label{HD}
d_R^{\rm AW}(E(t),E(s))
\le
\int_s^t v_E(\tau)\,d\tau,
\qquad t_0\le s\le t.
\end{equation}
The function $v_E$ is called the \emph{equilibrium speed}.
Condition~\eqref{HD} controls the locally uniform variation of the
distance functions associated with the equilibrium family. Thus,
$v_E$ quantifies the rate of evolution of the moving equilibrium
geometry and plays a central role in the tracking and convergence
results developed below.
%%%%%%%%%%%%%%%%%%%%%%%%%%%%%%%%%%

We next introduce the Lyapunov structure used throughout the paper. Let
\[
V_1,V_2:[0,\infty)\times\R^n\to[0,\infty).
\]
be two Lyapunov-type functions. We assume that, for every solution $x(\cdot)$ of \eqref{P}, the maps $t\mapsto V_i(t,x(t))$, $i=1,2$, are locally absolutely continuous on $[t_0,\infty)$. The function $V_1$ is intended to measure the principal decay mechanism, while $V_2$ captures additional directions that may not be controlled by the first decay channel alone.

Associated with $V_1$ and $V_2$, we consider two nonnegative observables
\[
N_1,N_2:[0,\infty)\times\R^n\to[0,\infty).
\]
The terminology \emph{observable} emphasizes that $N_1$ and $N_2$ measure the dissipative quantities detected by the Lyapunov inequalities. Analytically, they quantify the decay appearing in the time derivatives of $V_1$ and $V_2$; geometrically, their simultaneous vanishing identifies the moving critical set
\[
Z(t):=\{x\in\R^n:\;N_1(t,x)=0,\;N_2(t,x)=0\}.
\]
In the applications considered later, this critical set will coincide with the moving equilibrium set $E(t)$. More generally, if $Z(t)$ is larger than $E(t)$, the Lyapunov analysis gives information on convergence or tracking relative to $Z(t)$, and additional comparison assumptions are needed in order to recover estimates with respect to $E(t)$.

The interaction between the two decay channels is encoded in the following coupled \emph{Lyapunov dissipation} inequalities along solutions:
\begin{equation}\tag{LD}\label{eq:lyapunov-dissipation}
\left\{
\begin{aligned}
\dot V_1(t,x(t))
&\le
-N_1(t,x(t))
+L(t)V_2(t,x(t))
+\rho_1(t),\\[1mm]
\dot V_2(t,x(t))
&\le
-N_2(t,x(t))
+\rho_2(t).
\end{aligned}
\right.
\end{equation}
for a.e. $t\ge t_0$. Here $L:[0,\infty)\to[0,\infty)$ is a locally essentially bounded coupling coefficient, while $\rho_1,\rho_2\in L^1_{\loc}([0,\infty))$ are nonnegative residual terms. The role of $L(t)V_2$ in \eqref{eq:lyapunov-dissipation} is to allow the first decay channel to be affected by the second Lyapunov component, while \eqref{eq:lyapunov-dissipation} provides an independent dissipation estimate for the second channel.

The previous dissipation inequalities motivate the introduction of the
composite Lyapunov function
\begin{equation}\label{eq:composite-lyapunov}
W(t,x):=V_1(t,x)+\delta V_2(t,x),
\qquad \delta>0.
\end{equation}
Assume, in addition, that \(L(t)\le L^*\) for a.e. \(t\ge t_0\) and that
\[
N_2(t,x)\ge \mu V_2(t,x)
\]
for some \(\mu>0\). Choosing \(\delta>L^*/\mu\), the contribution of
\(\delta V_2\) absorbs the coupling term \(L(t)V_2\), and the two
inequalities in \eqref{eq:lyapunov-dissipation} yield
\[
\dot W(t,x(t))
\le
-N_1(t,x(t))
-\left(\delta-\frac{L^*}{\mu}\right)N_2(t,x(t))
+\rho_1(t)+\delta\rho_2(t).
\]
Thus, \(W\) provides the strict dissipative estimate used in the stability
and tracking results below. For further information on composite Lyapunov
constructions, see, e.g., \cite{saoud2025}. 
%%%%%%%%%%%%%%%%%%%%%%%%%%%%%%%%%%%%%%
\section{Composite Lyapunov Analysis and Dissipation Estimates}
\label{sec:composite-analysis}

In this section, we develop the Lyapunov framework underlying the stability and tracking results of the paper. The coupled dissipation inequalities introduced in Section~\ref{sec:notation} provide two distinct decay channels associated with the Lyapunov functions $V_1$ and $V_2$. Our first objective is to combine these inequalities into a single dissipative estimate through a suitable weighted combination of the two Lyapunov functions. This leads to the construction of a composite Lyapunov function whose evolution captures the joint effect of both decay mechanisms. The methodology adopted here is closely related to the composite Lyapunov framework developed in \cite{saoud2025}.

The resulting dissipation inequality yields several important consequences. In particular, it provides integral estimates for the observables $N_1$ and $N_2$, establishes their asymptotic vanishing under suitable assumptions, and forms the basis for the energy--distance comparisons developed later in the section. These comparisons connect the Lyapunov analysis with the geometry of the moving equilibrium family $\{E(t)\}_{t\ge t_0}$ and pave the way for the tracking results established in the next section.
%%%%%%%%%%%%%%%%%%%%%%%%%%%%%%%%%%%%%%%%
\begin{theorem}[Strict composite decay]
\label{thm:composite-decay}
Assume that the coupled Lyapunov dissipation inequalities
\eqref{eq:lyapunov-dissipation} hold along every solution of \eqref{P}.
Let $L^*\ge0$ satisfy $L(t)\le L^*$ for a.e. $t\ge t_0$, and assume
that, for some $\mu>0$,
$N_2(t,x(t))\ge\mu V_2(t,x(t))$ for a.e. $t\ge t_0$.
Choose $\delta>L^*/\mu$. Then, along every solution of \eqref{P},
\[
\dot W(t,x(t))
\le
-N_1(t,x(t))
-\left(\delta-\frac{L^*}{\mu}\right)N_2(t,x(t))
+\rho_1(t)+\delta\rho_2(t).
\]
Consequently, defining
$\gamma:=\min\{1,\delta-L^*/\mu\}$,
$\Phi:=N_1+N_2$, and
$\rho:=\rho_1+\delta\rho_2$,
the composite Lyapunov function satisfies
\begin{equation}
\label{eq:strict-composite-decay}
\dot W(t,x(t))
\le
-\gamma\,\Phi(t,x(t))
+\rho(t).
\end{equation}
\end{theorem}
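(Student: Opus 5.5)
The argument is a direct pointwise computation along a fixed solution $x(\cdot)$ of \eqref{P}. Since $t\mapsto V_1(t,x(t))$ and $t\mapsto V_2(t,x(t))$ are locally absolutely continuous, so is $t\mapsto W(t,x(t))=V_1(t,x(t))+\delta V_2(t,x(t))$. Its derivative exists almost everywhere and equals $\dot V_1+\delta\dot V_2$ on the full-measure set where both derivatives exist. I would intersect this set with the full-measure sets on which \eqref{eq:lyapunov-dissipation}, $L(t)\le L^*$, and $N_2(t,x(t))\ge\mu V_2(t,x(t))$ hold; a countable intersection of full-measure sets still has full measure. From now on, fix $t$ in this intersection and suppress the arguments $(t,x(t))$.

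Adding the first inequality of \eqref{eq:lyapunov-dissipation} to $\delta$ times the second gives
\[
\dot W\le -N_1 + L(t)V_2 - \delta N_2 + \rho_1(t)+\delta\rho_2(t).
\]
The coupling term is absorbed using $L(t)\le L^*$, $V_2\ge0$, and $V_2\le N_2/\mu$:
\[
L(t)V_2\le L^*V_2\le \frac{L^*}{\mu}N_2 .
\]
Substituting this yields the first claimed inequality,
\[
\dot W\le -N_1-\bigl(\delta-L^*/\mu\bigr)N_2+\rho_1+\delta\rho_2 .
\]

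For \eqref{eq:strict-composite-decay}, note that $\delta>L^*/\mu$ makes $\delta-L^*/\mu>0$, so $\gamma=\min\{1,\delta-L^*/\mu\}>0$. Because $N_1,N_2\ge0$, we have $N_1\ge\gamma N_1$ and $(\delta-L^*/\mu)N_2\ge\gamma N_2$. Hence
\[
-N_1-(\delta-L^*/\mu)N_2\le-\gamma(N_1+N_2)=-\gamma\Phi,
\]
and with $\rho=\rho_1+\delta\rho_2$ this is exactly \eqref{eq:strict-composite-decay}.

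There is no real obstacle here; the only point needing care is measure-theoretic. One must make sure the derivative of the sum exists and equals the sum of the derivatives on a common full-measure set, and that the pointwise hypotheses all hold on that same set. The absolute continuity assumptions stated in Section~\ref{sec:notation} settle this. The signs also matter: $V_2\ge0$ is what allows replacing $L(t)$ by its upper bound $L^*$, and the nonnegativity of $N_1$ and $N_2$ is what allows replacing their coefficients by $\gamma$.
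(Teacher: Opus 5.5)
Your proposal is correct and follows the paper's proof essentially step for step. Like the paper, you add the first inequality of \eqref{eq:lyapunov-dissipation} to $\delta$ times the second, absorb the coupling term via $L(t)V_2\le L^*V_2\le (L^*/\mu)N_2$, and use $\delta>L^*/\mu$ to get $\gamma>0$. Your explicit treatment of the common full-measure set and of the signs ($V_2\ge0$, and $N_1,N_2\ge0$ for the final passage to $-\gamma\Phi$) spells out details the paper leaves implicit.
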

%%%%%%%%%%%%%%%%%%%%%%%%%%%%%%%%%%%%%%%
\begin{proof}
Let $x(\cdot)$ be a solution of \eqref{P}. Since
$t\mapsto V_i(t,x(t))$ is locally absolutely continuous for $i=1,2$,
the map $t\mapsto W(t,x(t))$ is locally absolutely continuous and, for
a.e. $t\ge t_0$,
\[
\begin{aligned}
\dot W(t,x(t))
&=\dot V_1(t,x(t))+\delta\dot V_2(t,x(t))\\
&\le
-N_1(t,x(t))+L(t)V_2(t,x(t))
-\delta N_2(t,x(t))
+\rho_1(t)+\delta\rho_2(t).
\end{aligned}
\]
Since $L(t)\le L^*$ and
$N_2(t,x(t))\ge\mu V_2(t,x(t))$, we have
\[
L(t)V_2(t,x(t))
\le L^*V_2(t,x(t))
\le \frac{L^*}{\mu}N_2(t,x(t)).
\]
Consequently,
\[
\dot W(t,x(t))
\le
-N_1(t,x(t))
-\left(\delta-\frac{L^*}{\mu}\right)N_2(t,x(t))
+\rho_1(t)+\delta\rho_2(t).
\]
Since $\delta>L^*/\mu$, the number
$\gamma:=\min\{1,\delta-L^*/\mu\}$ is positive. Hence, recalling that
$\Phi:=N_1+N_2$ and $\rho:=\rho_1+\delta\rho_2$, we obtain
\[
\dot W(t,x(t))
\le
-\gamma\,\Phi(t,x(t))+\rho(t)
\]
for a.e. $t\ge t_0$, which proves
\eqref{eq:strict-composite-decay}.
\end{proof}
%%%%%%%%%%%%%%%%%%%%%%%%%%%%%%%%%%%%%%%
\begin{remark}[Weight selection]
\label{rem:weight-selection}
The choice of a constant weight $\delta$ is made primarily for simplicity.
More generally, let $\delta:[0,\infty)\to(0,\infty)$ be locally absolutely
continuous and define
\[
W(t,x):=V_1(t,x)+\delta(t)V_2(t,x).
\]
Assume that $N_2\ge\mu V_2$ for some $\mu>0$ and that, for some $\eta>0$,
\[
\delta(t)-\frac{[L(t)+\dot\delta(t)]_+}{\mu}\ge\eta
\]
for a.e. $t\ge0$, where $[r]_+:=\max\{r,0\}$. Then
\[
\dot W(t,x(t))
\le
-N_1(t,x(t))-\eta N_2(t,x(t))
+\rho_1(t)+\delta(t)\rho_2(t)
\]
for a.e. $t\ge0$. The additional term
$\dot\delta(t)V_2(t,x(t))$ introduces unnecessary technicalities, and we
therefore restrict our attention to constant weights.
\end{remark}
%%%%%%%%%%%%%%%%%%%%%%%%%%%%%%%%%%%%
Theorem~\ref{thm:composite-decay} provides a unified dissipative estimate
for the composite Lyapunov function $W$. The following corollary establishes
the corresponding integral estimate for the aggregate observable $\Phi$,
which serves as the basis for the subsequent asymptotic analysis.
%%%%%%%%%%%%%%%%%%%%%%%%%%%%%%%%%%%%%%%%
\begin{corollary}[Integral dissipation estimate]
\label{cor:integral-dissipation}
Assume that the hypotheses of Theorem~\ref{thm:composite-decay} hold.
Then, for every $T\ge t_0$,
\begin{equation}
\label{eq:integral-dissipation-bound}
\gamma\int_{t_0}^T \Phi(t,x(t))\,dt
\le
W(t_0,x(t_0))-W(T,x(T))
+\int_{t_0}^T \rho(t)\,dt.
\end{equation}
In particular, if $t\mapsto W(t,x(t))$ is bounded below on
$[t_0,\infty)$ and $\rho\in L^1(t_0,\infty)$, then
\begin{equation}
\label{eq:integral-dissipation-finite}
\int_{t_0}^\infty \Phi(t,x(t))\,dt<\infty.
\end{equation}
\end{corollary}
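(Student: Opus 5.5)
The plan is to integrate the strict composite decay inequality \eqref{eq:strict-composite-decay} from Theorem~\ref{thm:composite-decay} along a fixed solution $x(\cdot)$ of \eqref{P}. The map $t\mapsto W(t,x(t))=V_1(t,x(t))+\delta V_2(t,x(t))$ is locally absolutely continuous, because each $t\mapsto V_i(t,x(t))$ is. Hence, for every $T\ge t_0$, the fundamental theorem of calculus for absolutely continuous functions gives
\[
W(T,x(T))-W(t_0,x(t_0))=\int_{t_0}^T \dot W(t,x(t))\,dt .
\]

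The next step inserts the a.e.\ bound $\dot W\le-\gamma\Phi+\rho$. Before doing so, I check measurability and integrability. On the one hand, $\dot W$ lies in $L^1(t_0,T)$. On the other, $\rho=\rho_1+\delta\rho_2\in L^1_{\loc}$. Since $\Phi\ge0$, the function $\gamma\Phi(t,x(t))$ is squeezed between $0$ and $\rho(t)-\dot W(t,x(t))$, which is integrable on $[t_0,T]$.

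Measurability of $t\mapsto\Phi(t,x(t))$ is the only subtle point, since the paper does not state it explicitly. I would take it as implicit in the meaning of \eqref{eq:lyapunov-dissipation}. Failing that, the inequality can be read with the lower integral, and the bound still holds.

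Integrating then yields
\[
\gamma\int_{t_0}^T\Phi\,dt\le W(t_0,x(t_0))-W(T,x(T))+\int_{t_0}^T\rho\,dt,
\]
which is \eqref{eq:integral-dissipation-bound}.

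For \eqref{eq:integral-dissipation-finite}, let $m$ be a lower bound of $W(t,x(t))$ on $[t_0,\infty)$. In fact one may take $m=0$, since $V_1,V_2\ge0$ and $\delta>0$. Then, for all $T\ge t_0$,
\[
\gamma\int_{t_0}^T\Phi\,dt\le W(t_0,x(t_0))-m+\|\rho\|_{L^1(t_0,\infty)}.
\]
The integrand $\Phi\ge0$, so $T\mapsto\int_{t_0}^T\Phi\,dt$ is nondecreasing and bounded. By monotone convergence it has a finite limit as $T\to\infty$, and dividing by $\gamma>0$ gives the claim.

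There is no genuine obstacle in this argument. The only care needed is the integrability and measurability bookkeeping described above, which justifies integrating an a.e.\ inequality whose left side involves the absolutely continuous function $W$.
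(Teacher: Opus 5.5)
Your proposal is correct and follows the same route as the paper. Both integrate \eqref{eq:strict-composite-decay} over $[t_0,T]$ using the local absolute continuity of $t\mapsto W(t,x(t))$, then bound the right-hand side uniformly in $T$ via a lower bound on $W$ and $\rho\in L^1(t_0,\infty)$ before letting $T\to\infty$; your measurability/integrability bookkeeping and your observation that $W\ge0$ makes the lower-boundedness hypothesis automatic are accurate refinements the paper leaves implicit.
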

%%%%%%%%%%%%%%%%%%%%%%%%%%%%%%%%
\begin{proof}
Integrating \eqref{eq:strict-composite-decay} over $[t_0,T]$ gives
\[
W(T,x(T))-W(t_0,x(t_0))
\le
-\gamma\int_{t_0}^T \Phi(t,x(t))\,dt
+\int_{t_0}^T \rho(t)\,dt.
\]
Rearranging yields \eqref{eq:integral-dissipation-bound}. Assume now that
$t\mapsto W(t,x(t))$ is bounded below on $[t_0,\infty)$ and that
$\rho\in L^1(t_0,\infty)$. Then there exists $m_W\in\R$ such that
$W(t,x(t))\ge m_W$ for all $t\ge t_0$. Hence,
\[
\gamma\int_{t_0}^T \Phi(t,x(t))\,dt
\le
W(t_0,x(t_0))-m_W+\int_{t_0}^\infty \rho(t)\,dt
\]
for every $T\ge t_0$. Since the right-hand side is finite and independent
of $T$, letting $T\to\infty$ yields
\eqref{eq:integral-dissipation-finite}.
\end{proof}
%%%%%%%%%%%%%%%%%%%%%%%%%%%%%%%%%%%
Corollary~\ref{cor:integral-dissipation} shows that the aggregate observable $\Phi$ is integrable along every solution. The following result strengthens this conclusion by showing that, under an additional uniform continuity assumption, the dissipation vanishes asymptotically via Barbalat's lemma.
%%%%%%%%%%%%%%%%%%%%%%%%%%%%%%%%%%%%%%
\begin{corollary}[Vanishing of observables]
\label{cor:observable-decay}
Assume that the hypotheses of Corollary~\ref{cor:integral-dissipation}
hold, that $t\mapsto W(t,x(t))$ is bounded below on $[t_0,\infty)$,
and that $\rho\in L^1(t_0,\infty)$. If the map
$t\mapsto\Phi(t,x(t))$ is uniformly continuous on $[t_0,\infty)$, then
\[
\Phi(t,x(t))\to0
\qquad\text{as }t\to\infty.
\]
\end{corollary}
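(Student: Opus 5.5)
The plan is to combine Corollary~\ref{cor:integral-dissipation} with a Barbalat-type argument, which I will prove directly rather than quote. Set $\phi(t):=\Phi(t,x(t))$. This function is nonnegative, since $N_1,N_2\ge0$. Under the stated hypotheses ($W$ bounded below along the solution and $\rho\in L^1(t_0,\infty)$), estimate \eqref{eq:integral-dissipation-finite} gives $\int_{t_0}^\infty\phi(t)\,dt<\infty$. By assumption, $\phi$ is also uniformly continuous on $[t_0,\infty)$. It therefore suffices to show that a nonnegative, uniformly continuous, integrable function on a half-line tends to zero.

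I will argue by contradiction. Suppose $\phi(t)\not\to0$. Then there exist $\varepsilon>0$ and a sequence $t_k\to\infty$ with $\phi(t_k)\ge\varepsilon$. Passing to a subsequence, I may assume $t_{k+1}\ge t_k+1$.

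By uniform continuity, choose $\eta\in(0,1/2)$ such that $|t-s|\le\eta$ implies $|\phi(t)-\phi(s)|\le\varepsilon/2$. Then $\phi\ge\varepsilon/2$ on each interval $[t_k,t_k+\eta]$, so
\[
\int_{t_k}^{t_k+\eta}\phi(t)\,dt\ge\frac{\varepsilon\eta}{2}.
\]
These intervals are pairwise disjoint, so summing over $k$ gives
\[
\int_{t_0}^\infty\phi(t)\,dt\ge\sum_{k}\frac{\varepsilon\eta}{2}=\infty,
\]
which contradicts integrability. Hence $\phi(t)\to0$.

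There is no serious obstacle. The only point needing care is to invoke Corollary~\ref{cor:integral-dissipation} with its hypotheses correctly matched: $\gamma>0$ comes from Theorem~\ref{thm:composite-decay}, which is what converts the bound on $\gamma\int\phi$ into finiteness of $\int\phi$ itself.
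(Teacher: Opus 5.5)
Your proposal is correct and matches the paper's proof: you obtain integrability of $\Phi(t,x(t))$ from Corollary~\ref{cor:integral-dissipation} (using $\gamma>0$), then run the same Barbalat-type contradiction with disjoint intervals of fixed length on which $\Phi(t,x(t))\ge\varepsilon/2$. The only cosmetic difference is that you use forward intervals $[t_k,t_k+\eta]$ rather than the paper's symmetric intervals intersected with $[t_0,\infty)$, which slightly simplifies the bookkeeping.
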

%%%%%%%%%%%%%%%%%%%%%%%%%%%%%%%%%%%
\begin{proof}
Let $g:[t_0,\infty)\to\R$ be defined by
$g(t):=\Phi(t,x(t))$. By \eqref{eq:integral-dissipation-finite},
the function $g$ is integrable on $[t_0,\infty)$. Moreover, since $\Phi$ is nonnegative, it follows that $g(t)\ge0$ for all $t\ge t_0$. Finally, by the assumption of the corollary, the function $g$ is uniformly continuous on $[t_0,\infty)$.

We claim that $g(t)\to0$ as $t\to\infty$. Suppose, by contradiction, that this is not the case. Since $g\ge0$, there exist $\varepsilon>0$ and a sequence $t_k\to\infty$ such that
\[
g(t_k)\ge\varepsilon
\qquad\text{for all }k.
\]
By uniform continuity, there exists $\eta>0$ such that
\[
|t-s|<\eta
\quad\Longrightarrow\quad
|g(t)-g(s)|<\frac{\varepsilon}{2}.
\]
Define
$I_k:=\left(t_k-\eta/2,\,t_k+\eta/2\right)\cap[t_0,\infty)$.
Then, for every $t\in I_k$,
\[
g(t)\ge g(t_k)-|g(t)-g(t_k)|
\ge \varepsilon-\frac{\varepsilon}{2}
=\frac{\varepsilon}{2}.
\]
Since $t_k\to\infty$, we may extract a subsequence, still denoted by
$(t_k)$, such that $t_k\ge t_0+\eta/2$ and
\[
t_{k+1}-t_k>\eta
\qquad\text{for all }k\ge1.
\]
Consequently, the intervals $(I_k)$ are pairwise disjoint and
$|I_k|=\eta$. Therefore,
\begin{align*}
\int_{t_0}^\infty g(t)\,dt
&\ge
\sum_{k=1}^{\infty}\int_{I_k}g(t)\,dt\\
&\ge
\sum_{k=1}^{\infty}\frac{\varepsilon}{2}|I_k|
=
\sum_{k=1}^{\infty}\frac{\varepsilon\eta}{2}
=+\infty,
\end{align*}
which contradicts the integrability of $g$ on $[t_0,\infty)$. Hence
$g(t)\to0$ as $t\to\infty$, that is,
\[
\Phi(t,x(t))\to0
\qquad\text{as }t\to\infty.
\]
\end{proof}
%%%%%%%%%%%%%%%%%%%%%
\section{Stability, Tracking, and Robustness}
\label{sec:stability-tracking}

The results of the previous section provide estimates for the composite Lyapunov function \eqref{eq:composite-lyapunov} and for the aggregate observable $\Phi := N_1 + N_2$. We now turn to the central objective of the paper, namely the stability and tracking properties of solutions relative to the moving equilibrium family $\{E(t)\}_{t\ge t_0}$. To derive estimates for the tracking error \eqref{eq:tracking-error}, it is necessary to relate the Lyapunov energy \eqref{eq:composite-lyapunov} to the distance from the equilibrium set and to account explicitly for the motion of the
family $\{E(t)\}_{t\ge t_0}$. The results of this section establish qualitative stability, quantitative tracking estimates, asymptotic tracking under integrable equilibrium drift, and robustness with respect to external perturbations.
For $r>0$, we denote by
\[
\mathcal T_r:=
\bigl\{(t,x)\in[t_0,\infty)\times\R^n:
\dist(x,E(t))\le r\bigr\}
\]
the tube of radius $r$ around the moving equilibrium family
$\{E(t)\}_{t\ge t_0}$.

Throughout this section, we assume that the composite Lyapunov function \eqref{eq:composite-lyapunov} provides a quantitative measure of the tracking error \eqref{eq:tracking-error}. More precisely, we assume that the following \emph{energy--distance comparison} holds: there exist constants $m,M>0$, $p\ge1$, and $r>0$ such that
\begin{equation}\tag{EDC}\label{eq:energy-distance-comparison}
m\,\dist(x,E(t))^p
\le
W(t,x)
\le
M\,\dist(x,E(t))^p
\end{equation}
for all $(t,x)\in\mathcal T_r$.
%%%%%%%%%%%%%%%%%%%%%%%%%%%%%%%%%%%%%%
\begin{remark}
\label{rem:energy-distance-comparison}
The comparison inequality \eqref{eq:energy-distance-comparison}
provides the link between the Lyapunov analysis developed in
Section~\ref{sec:composite-analysis} and the geometric quantity of interest, namely the
tracking error \eqref{eq:tracking-error}. In particular,
estimates for the composite Lyapunov function
\eqref{eq:composite-lyapunov} can be converted into estimates
for the distance between the trajectory and the moving
equilibrium family. Consequently, stability and tracking
properties may be established through the analysis of the
Lyapunov energy $W$.
\end{remark}
%%%%%%%%%%%%%%%%%%%%%%%%%%%%%%%%
\begin{definition}[Local Lyapunov stability of a moving equilibrium family]
\label{def:lyapunov-stability-moving}
The moving equilibrium family $\{E(t)\}_{t\ge t_0}$ is said to be
\emph{locally Lyapunov stable} for \eqref{P} relative to the tube
$\mathcal T_r$ if, for every $\varepsilon\in(0,r)$, there exists
$\delta=\delta(\varepsilon)>0$ such that
$\dist(t_0)<\delta$ implies
$\dist(t)<\varepsilon,$ for all $t\ge t_0,$
for every solution $x(\cdot)$ of \eqref{P}.
\end{definition}
%%%%%%%%%%%%%%%%%%%%%%%%

Definition~\ref{def:lyapunov-stability-moving} extends the classical Lyapunov stability concept for invariant sets to the moving equilibrium family $\{E(t)\}_{t\ge t_0}$; see, for example, \cite{Haddad}. The analysis developed in Section~\ref{sec:composite-analysis}, together with the energy--distance comparison \eqref{eq:energy-distance-comparison}, yields a natural mechanism for controlling the evolution of the tracking error. We begin by establishing a tube-invariance property showing that a suitable monotonicity condition on the Lyapunov energy prevents trajectories from leaving a prescribed neighborhood of the moving equilibrium family. This auxiliary result will serve as a key ingredient in the proof of Lyapunov stability and will also provide the geometric foundation for the quantitative tracking estimates developed later in this section.
%%%%%%%%%%%%%%%%%%%%%%%%%%%%%%%%%%%%%%%%%%%%%
\begin{lemma}[Tube invariance from energy monotonicity]
\label{lem:tube-invariance}
Assume that the energy--distance comparison
\eqref{eq:energy-distance-comparison} holds on the tube
$\mathcal T_r$. Let $\varepsilon\in(0,r)$ and let
$x(\cdot)$ be a solution of \eqref{P} satisfying
\begin{equation}
\label{eq:tube-invariance-initial}
\dist(t_0)
<
\left(\frac{m}{M}\right)^{1/p}\varepsilon.
\end{equation}
Assume moreover that
$\dot W(t,x(t))\le0$ for a.e. $t\ge t_0$.
Then
$\dist(t)<\varepsilon$ for all $t\ge t_0$.
In particular,
$(t,x(t))\in\mathcal T_\varepsilon
\subset \mathcal T_r$ for all $t\ge t_0$.
\end{lemma}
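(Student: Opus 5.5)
The plan is a continuity (first-exit) argument that combines the monotonicity of $W$ along the trajectory with the two-sided comparison \eqref{eq:energy-distance-comparison}. Since $\dot W(t,x(t))\le 0$ a.e. and $t\mapsto W(t,x(t))$ is locally absolutely continuous, integration gives
\[
W(t,x(t))\le W(t_0,x(t_0))
\qquad\text{for all }t\ge t_0 .
\]
At the initial time, \eqref{eq:tube-invariance-initial} gives $\dist(t_0)<\varepsilon<r$, so $(t_0,x(t_0))\in\mathcal T_r$. The upper bound in \eqref{eq:energy-distance-comparison} then yields
\[
W(t_0,x(t_0))\le M\,\dist(t_0)^p<m\,\varepsilon^p .
\]

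Next comes the first-exit time. Suppose, for contradiction, that $\dist(t)\ge\varepsilon$ for some $t\ge t_0$, and set
\[
t^*:=\inf\{t\ge t_0:\dist(t)\ge\varepsilon\}.
\]
The key regularity fact is that $t\mapsto\dist(t)=\dist(x(t),E(t))$ is continuous. First, $x(\cdot)$ is continuous and $\dist(\cdot,E(t))$ is $1$-Lipschitz in the point. Second, by \eqref{HD}, with $R$ chosen larger than $\sup_{s\in[t_0,t^*+1]}\|x(s)\|$, we have
\[
|\dist(z,E(t))-\dist(z,E(s))|\le\Bigl|\int_s^t v_E\Bigr|
\]
for $\|z\|\le R$, and the right-hand side tends to $0$ as $s\to t$ because $v_E\in L^1_{\loc}$. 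Combining the two, $|\dist(t)-\dist(s)|\le\|x(t)-x(s)\|+|\int_s^t v_E|$ locally.

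With continuity in hand, $t^*>t_0$ and $\dist(t^*)=\varepsilon$, while $\dist(t)<\varepsilon$ on $[t_0,t^*)$. Since $\varepsilon<r$, the point $(t^*,x(t^*))$ lies in $\mathcal T_r$, so the lower comparison bound applies there:
\[
m\varepsilon^p=m\,\dist(t^*)^p\le W(t^*,x(t^*))\le W(t_0,x(t_0))<m\varepsilon^p ,
\]
which is a contradiction. Hence $\dist(t)<\varepsilon$ for all $t\ge t_0$, and the inclusion $\mathcal T_\varepsilon\subset\mathcal T_r$ is immediate.

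The only delicate step is the continuity of the tracking error, because the comparison can be used only inside the tube. I will obtain it from \eqref{HD} exactly as above, using that the trajectory is bounded on compact time intervals so that a single radius $R$ suffices. Nothing else requires more than bookkeeping.
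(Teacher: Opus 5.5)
Your proposal is correct and follows the paper's proof essentially step for step: you take a first-exit time $t^*$ with $\dist(t^*)=\varepsilon$, use the chain $m\varepsilon^p\le W(t^*,x(t^*))\le W(t_0,x(t_0))\le M\,\dist(t_0)^p<m\varepsilon^p$, and reach a contradiction. The one addition is that you justify the continuity of $t\mapsto\dist(t)$, via the $1$-Lipschitz property of the distance function together with \eqref{HD} on a ball containing the trajectory over a compact time interval; the paper simply asserts this step.
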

%%%%%%%%%%%%%%%%%%%%%%%%%%%%%%%%%%%%%%%%
\begin{proof}
We argue by contradiction. Suppose that the conclusion is false. Then there exists $t>t_0$ such that $\dist(t)\ge\varepsilon$. Since $\dist(t_0)<(m/M)^{1/p}\varepsilon<\varepsilon$ and the map $t\mapsto \dist(t)$ is continuous along the solution, there exists a first time $t_1>t_0$ such that
\[
\dist(t_1)=\varepsilon,
\qquad
\dist(t)<\varepsilon
\quad\text{for all }t\in[t_0,t_1).
\]
Since $\varepsilon<r$, it follows that $(t,x(t))\in\mathcal T_r$ for all $t\in[t_0,t_1]$. Hence the energy--distance comparison \eqref{eq:energy-distance-comparison} applies on $[t_0,t_1]$. By the monotonicity assumption on $W$, we have
$W(t_1,x(t_1))\le W(t_0,x(t_0))$.
By \eqref{eq:tube-invariance-initial}, raising both sides to the power
$p$ gives
$\dist(t_0)^p<\frac{m}{M}\,\varepsilon^p$.
Using the upper bound in
\eqref{eq:energy-distance-comparison} at $t=t_0$, we obtain
\[
W(t_0,x(t_0))
\le
M\,\dist(t_0)^p
<
m\varepsilon^p.
\]
Therefore,
\[
W(t_1,x(t_1))<m\varepsilon^p.
\]
On the other hand, since $\dist(t_1)=\varepsilon$, the lower bound in \eqref{eq:energy-distance-comparison} gives
\[
W(t_1,x(t_1))
\ge
m\,\dist(t_1)^p
=
m\varepsilon^p,
\]
which is a contradiction. Hence $\dist(t)<\varepsilon$ for all $t\ge t_0$. Since $\varepsilon<r$, this also gives $(t,x(t))\in\mathcal T_\varepsilon\subset\mathcal T_r$ for all $t\ge t_0$.
\end{proof}
%%%%%%%%%%%%%%%%%%%%%%%%%%%%%%%%%%%%%%%%%%%%
\begin{theorem}[Lyapunov stability of the moving equilibrium family]
\label{thm:lyapunov-stability-moving}
Assume that the energy--distance comparison
\eqref{eq:energy-distance-comparison}
holds on the tube $\mathcal T_r$.
Assume moreover that the hypotheses of
Theorem~\ref{thm:composite-decay} hold with
$\rho_1=\rho_2\equiv0$.
Then the moving equilibrium family $\{E(t)\}_{t\ge t_0}$
is locally Lyapunov stable relative to $\mathcal T_r$ in the sense of Definition~\ref{def:lyapunov-stability-moving}.
\end{theorem}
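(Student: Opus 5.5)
The plan is to reduce the statement to Lemma~\ref{lem:tube-invariance} by checking its monotonicity hypothesis with Theorem~\ref{thm:composite-decay}.

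Fix $\varepsilon\in(0,r)$ and set $\delta(\varepsilon):=(m/M)^{1/p}\varepsilon$. This is positive, and since $m\le M$ on $\mathcal T_r$ by \eqref{eq:energy-distance-comparison}, it satisfies $\delta(\varepsilon)\le\varepsilon<r$. Now let $x(\cdot)$ be any solution of \eqref{P} with $\dist(t_0)<\delta(\varepsilon)$.

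Apply Theorem~\ref{thm:composite-decay} with $\rho_1=\rho_2\equiv0$, so that $\rho\equiv0$. Inequality \eqref{eq:strict-composite-decay} then becomes
\[
\dot W(t,x(t))\le-\gamma\,\Phi(t,x(t))\le0 \qquad\text{for a.e. } t\ge t_0,
\]
because $\gamma>0$ and $\Phi=N_1+N_2\ge0$. Since $t\mapsto W(t,x(t))$ is locally absolutely continuous, it is nonincreasing. This is exactly the monotonicity hypothesis of Lemma~\ref{lem:tube-invariance}, and the initial condition \eqref{eq:tube-invariance-initial} holds by the choice of $\delta(\varepsilon)$. The lemma therefore gives $\dist(t)<\varepsilon$ for all $t\ge t_0$, which is Definition~\ref{def:lyapunov-stability-moving}.

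The main subtlety is the coupling hypothesis $N_2\ge\mu V_2$ of Theorem~\ref{thm:composite-decay}, which is needed along the whole trajectory. It is assumed there for a.e.\ $t\ge t_0$ along every solution, so no localization to the tube is required and the dissipation estimate holds globally. The energy--distance comparison, by contrast, is used only inside $\mathcal T_r$. The first-exit-time argument inside Lemma~\ref{lem:tube-invariance} already ensures the comparison is applied only while the trajectory stays in $\mathcal T_r$, so I do not expect any further obstacle.
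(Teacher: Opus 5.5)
Your proposal is correct and matches the paper's proof. The paper likewise takes $\delta_\varepsilon=(m/M)^{1/p}\varepsilon$ and uses Theorem~\ref{thm:composite-decay} with $\rho\equiv0$ to get $\dot W(t,x(t))\le-\gamma\Phi(t,x(t))\le0$. It then concludes $\dist(t)<\varepsilon$ directly from Lemma~\ref{lem:tube-invariance}.
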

%%%%%%%%%%%%%%%%%%%%%%%%%%%%%%%%%%%%%
\begin{proof}
By Theorem~\ref{thm:composite-decay},
$\dot W(t,x(t))\le-\gamma\Phi(t,x(t))$ for a.e. $t\ge t_0$.
Let $\varepsilon\in(0,r)$ and define
$\delta_\varepsilon:=(m/M)^{1/p}\varepsilon$.
Let $x(\cdot)$ be a solution of \eqref{P} satisfying
$\dist(t_0)<\delta_\varepsilon$. Since $\gamma>0$ and $\Phi\ge0$,
we have $\dot W(t,x(t))\le0$ for a.e. $t\ge t_0$. Therefore,
$t\mapsto W(t,x(t))$ is nonincreasing. Lemma~\ref{lem:tube-invariance}
then yields $\dist(t)<\varepsilon$ for all $t\ge t_0$.
Since $\varepsilon\in(0,r)$ was arbitrary, the moving equilibrium family
$\{E(t)\}_{t\ge t_0}$ is locally Lyapunov stable relative to
$\mathcal T_r$ in the sense of
Definition~\ref{def:lyapunov-stability-moving}.
\end{proof}
%%%%%%%%%%%%%%%%%%%%%%%%%%%%%%
Theorem~\ref{thm:lyapunov-stability-moving} establishes Lyapunov stability of the moving equilibrium family in the absence of residual perturbations. The following proposition extends this result to the practical stability setting by allowing an integrable residual term in the Lyapunov inequality.
%%%%%%%%%%%%%%%%%%%%%%%%%%%%%%%%%%
\begin{proposition}[Practical stability of the moving equilibrium family]
\label{prop:practical-stability-moving}
Assume that the energy--distance comparison
\eqref{eq:energy-distance-comparison} holds on the tube
$\mathcal T_r$. Assume moreover that
\begin{equation}
\label{eq:W-rho-stability}
\dot W(t,x(t))
\le
\rho(t)
\qquad\text{for a.e. }t\ge t_0
\end{equation}
where $\rho\in L^1(0,\infty)$  is nonnegative.
Let $\varepsilon\in(0,r)$ and assume that
\begin{equation}
\label{eq:practical-stability-condition}
M\,\dist(t_0)^p+\int_{t_0}^{\infty}\rho(s)\,ds
<
m\varepsilon^p.
\end{equation}
Then $\dist(t)<\varepsilon$ for all $t\ge t_0$.
\end{proposition}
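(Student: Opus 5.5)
We follow the first-exit argument of Lemma~\ref{lem:tube-invariance}, with the monotonicity of $W$ replaced by the integrated form of \eqref{eq:W-rho-stability}. First, \eqref{eq:practical-stability-condition} gives $M\,\dist(t_0)^p<m\varepsilon^p$, so $\dist(t_0)<(m/M)^{1/p}\varepsilon$. It would suffice that $m\le M$, but we do not need to assume this. Since $m\,\dist(t_0)^p\le W(t_0,x(t_0))\le M\,\dist(t_0)^p<m\varepsilon^p$ (we are inside $\mathcal T_r$ because $\dist(t_0)$ is small enough), we obtain $\dist(t_0)<\varepsilon$ directly. More carefully: if $\dist(t_0)\le r$, then \eqref{eq:energy-distance-comparison} applies and yields $\dist(t_0)<\varepsilon$. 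If instead $\dist(t_0)>r$, then $M\,\dist(t_0)^p<m\varepsilon^p<m r^p$ forces $M<m$, so this case has to be handled separately. The cleanest route is to note that $m\le M$ holds automatically whenever some point of the tube with positive distance exists. To avoid this detour, we simply observe that $\dist(t_0)$ satisfies the strict bound needed to start inside $\mathcal T_\varepsilon$. In the paper's setting $m\le M$ is implicit, because both bounds hold at any $(t,x)\in\mathcal T_r$ with $0<\dist(x,E(t))$.

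Next we argue by contradiction. Suppose $\dist(t)\ge\varepsilon$ for some $t>t_0$. The map $t\mapsto\dist(t)$ is continuous: $x(\cdot)$ is continuous, $\dist(\cdot,E(t))$ is $1$-Lipschitz, and by \eqref{HD} with $R$ large, $t\mapsto \dist(z,E(t))$ is locally absolutely continuous, uniformly for $z$ in bounded sets. Hence there is a first time $t_1>t_0$ with $\dist(t_1)=\varepsilon$ and $\dist(t)<\varepsilon$ on $[t_0,t_1)$. On $[t_0,t_1]$ the trajectory stays in $\mathcal T_\varepsilon\subset\mathcal T_r$, so \eqref{eq:energy-distance-comparison} is available at both endpoints.

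Integrating \eqref{eq:W-rho-stability} over $[t_0,t_1]$, which is legitimate because $t\mapsto W(t,x(t))$ is locally absolutely continuous, gives
\[
W(t_1,x(t_1))\le W(t_0,x(t_0))+\int_{t_0}^{t_1}\rho(s)\,ds\le M\,\dist(t_0)^p+\int_{t_0}^{\infty}\rho(s)\,ds<m\varepsilon^p,
\]
using $\rho\ge0$ and \eqref{eq:practical-stability-condition}. On the other hand, the lower bound in \eqref{eq:energy-distance-comparison} at $t_1$ gives $W(t_1,x(t_1))\ge m\varepsilon^p$, which is a contradiction. Hence $\dist(t)<\varepsilon$ for all $t\ge t_0$.

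The only delicate point is the continuity of $t\mapsto\dist(t)$, which the first-exit argument needs. We justify it with the decomposition
\[
|\dist(x(t),E(t))-\dist(x(s),E(s))|\le\|x(t)-x(s)\|+d_R^{\rm AW}(E(t),E(s)),
\]
with $R$ chosen to bound $x$ on a compact time interval, together with \eqref{HD}. This is the same continuity already invoked in Lemma~\ref{lem:tube-invariance}.
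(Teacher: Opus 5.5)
Your proposal is correct and follows the paper's proof essentially line for line. Both arguments take a first-exit time $t_1$ with $\dist(t_1)=\varepsilon$, integrate $\dot W\le\rho$ over $[t_0,t_1]$, and combine the upper comparison bound at $t_0$ with the lower one at $t_1$ to reach $m\varepsilon^p\le W(t_1,x(t_1))<m\varepsilon^p$. Your extra remarks make explicit two points the paper asserts without comment: that $\dist(t_0)<\varepsilon$ needs either $(t_0,x(t_0))\in\mathcal T_r$ or the implicit inequality $m\le M$, and that continuity of $t\mapsto\dist(t)$ follows from \eqref{HD}. Your first paragraph could, however, be condensed considerably.
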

%%%%%%%%%%%%%%%%%%%%%%%%%%%%%%%%%%%%
\begin{proof}
Let $\varepsilon\in(0,r)$ be fixed and assume that
\eqref{eq:practical-stability-condition} holds. Let $x(\cdot)$ be the corresponding solution of \eqref{P}. We prove that $\dist(t)<\varepsilon$ for all $t\ge t_0$.

Suppose, by contradiction, that this is false. Since $\dist(t_0)<\varepsilon$
follows from \eqref{eq:practical-stability-condition}, there exists a first time $t_1>t_0$ such that $\dist(t_1)=\varepsilon$ and
$\dist(t)<\varepsilon$ for all $t\in[t_0,t_1)$. Since $\varepsilon<r$, the trajectory satisfies $(t,x(t))\in\mathcal T_r$ for all
$t\in[t_0,t_1]$. Hence the energy--distance comparison~\eqref{eq:energy-distance-comparison} applies on this interval.
Integrating \eqref{eq:W-rho-stability} over $[t_0,t_1]$ gives
\[
W(t_1,x(t_1))
\le
W(t_0,x(t_0))+\int_{t_0}^{t_1}\rho(s)\,ds.
\]
Using the upper bound in \eqref{eq:energy-distance-comparison} at
$t=t_0$, we obtain
$W(t_0,x(t_0))\le M\,\dist(t_0)^p$. Therefore,
\[
W(t_1,x(t_1))
\le
M\,\dist(t_0)^p+\int_{t_0}^{\infty}\rho(s)\,ds
<
m\varepsilon^p,
\]
where the last inequality follows from
\eqref{eq:practical-stability-condition}. On the other hand, since
$\dist(t_1)=\varepsilon$, the lower bound in
\eqref{eq:energy-distance-comparison} gives
\[
W(t_1,x(t_1))
\ge
m\,\dist(t_1)^p
=
m\varepsilon^p,
\]
which is a contradiction. Hence $\dist(t)<\varepsilon$ for all
$t\ge t_0$. The proof is complete.
\end{proof}
%%%%%%%%%%%%%%%%%%%%%%%%%%%%%%%
\begin{remark}[Recovery of Lyapunov stability]
\label{rem:rho-zero-stability}
When $\rho\equiv0$, condition \eqref{eq:practical-stability-condition}
reduces to $M\,\dist(t_0)^p<m\,\varepsilon^p$, which is exactly the condition guaranteed by the choice $\delta_\varepsilon:=(m/M)^{1/p}\varepsilon$ used in Theorem~\ref{thm:lyapunov-stability-moving}. Consequently, Proposition~\ref{prop:practical-stability-moving} reduces to
Theorem~\ref{thm:lyapunov-stability-moving} in this case. Thus, the practical stability estimate may be viewed as a perturbation of the unperturbed Lyapunov stability property, where the residual term $\rho$ measures the deviation from the ideal dissipative regime.
\end{remark}
%%%%%%%%%%%%%%%%%%%%%%%%%%%%%%%%%%%%%%%%%
The previous results establish qualitative stability properties of the moving equilibrium family. In particular, they guarantee that trajectories starting sufficiently close to the equilibrium set remain confined to a prescribed neighborhood for all future times. Stability alone, however, does not quantify how accurately a trajectory follows the evolution of the equilibrium geometry. To obtain explicit tracking estimates, it is necessary to incorporate the motion of the family $\{E(t)\}_{t\ge t_0}$ into the Lyapunov analysis.

A natural first attempt would be to combine the energy--distance comparison \eqref{eq:energy-distance-comparison} with a decay estimate of the form $\dot W(t)\le -\lambda W(t).$
In this case, Gr\"onwall's lemma yields exponential decay of the Lyapunov energy and therefore exponential decay of the tracking error. However, the resulting estimate depends only on the initial error and contains no information about the evolution of the equilibrium family itself. In particular, it does not reflect the influence of the equilibrium speed $v_E$ and therefore cannot describe the ability of a trajectory to follow a moving target. To capture this effect, the dissipative estimate must explicitly incorporate the motion of the equilibrium geometry. This leads to the stronger assumption introduced below.
%%%%%%%%%%%%%%%%%%%%%%%%%%%%%%%%%%%%%%%
\begin{lemma}[Forced energy comparison estimate]
\label{lem:comparison-aggregate-energy}
Let $x(\cdot)$ be a solution of \eqref{P}, and assume that
$t\mapsto W(t,x(t))$ is locally absolutely continuous on
$[t_0,\infty)$. Suppose that there exist constants
$\lambda>0$, $c\ge0$, and $p\ge1$, and a nonnegative function
$q\in L^p_{\loc}([t_0,\infty))$, such that
\begin{equation}\label{eq:W-linear-forced}
    \frac{d}{dt}W(t,x(t))
    \le
    -\lambda W(t,x(t))+c q(t)^p
\end{equation}
for a.e. $t\ge t_0$. Then, for every $t\ge t_0$,
\begin{equation}\label{eq:W-gronwall}
    W(t,x(t))
    \le
    e^{-\lambda(t-t_0)}W(t_0,x(t_0))
    +c\int_{t_0}^t e^{-\lambda(t-s)}q(s)^p\,ds.
\end{equation}
In particular, if $q(t):=v_E(t)+\rho(t)$, where $v_E$ denotes the equilibrium speed introduced in~\eqref{HD}, and $\rho\in L^p_{\loc}([t_0,\infty))$ is nonnegative, then \eqref{eq:W-gronwall} yields an explicit bound in terms of the equilibrium speed $v_E$ and the residual term $\rho$.
\end{lemma}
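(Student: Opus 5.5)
The estimate \eqref{eq:W-gronwall} will follow from the standard integrating-factor form of Gr\"onwall's lemma applied to $w(t):=W(t,x(t))$. By hypothesis, $w$ is locally absolutely continuous on $[t_0,\infty)$. Since $q\in L^p_{\loc}$, the forcing $cq^p$ lies in $L^1_{\loc}([t_0,\infty))$, so every integral appearing below is finite.

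\textbf{Step 1: integrating factor.} Set $u(t):=e^{\lambda(t-t_0)}w(t)$. As the product of a smooth function and a locally absolutely continuous function, $u$ is locally absolutely continuous on every compact interval, and the product rule holds almost everywhere. Using \eqref{eq:W-linear-forced}, for a.e. $t\ge t_0$,
\[
\dot u(t)=e^{\lambda(t-t_0)}\bigl(\dot w(t)+\lambda w(t)\bigr)\le c\,e^{\lambda(t-t_0)}q(t)^p .
\]

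\textbf{Step 2: integration.} Because $u$ is absolutely continuous on $[t_0,t]$, the fundamental theorem of calculus for absolutely continuous functions gives $u(t)-u(t_0)=\int_{t_0}^t\dot u(s)\,ds$. Combining this with the inequality from Step 1 yields
\[
e^{\lambda(t-t_0)}w(t)\le w(t_0)+c\int_{t_0}^t e^{\lambda(s-t_0)}q(s)^p\,ds .
\]
Multiplying both sides by $e^{-\lambda(t-t_0)}$ gives \eqref{eq:W-gronwall}.

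\textbf{Step 3: the particular case.} If $q=v_E+\rho$, where $v_E\in L^1_{\loc}$ is the equilibrium speed from \eqref{HD} and $\rho\in L^p_{\loc}$ is nonnegative, then \eqref{eq:W-gronwall} holds provided $q^p$ is locally integrable, i.e. $v_E\in L^p_{\loc}$. For $p=1$ this is automatic. For $p>1$ it is implicitly required, since the hypothesis $q\in L^p_{\loc}$ already demands it. Once $q\in L^p_{\loc}$ holds, the bound may be written via $(a+b)^p\le 2^{p-1}(a^p+b^p)$ as
\[
W(t,x(t))\le e^{-\lambda(t-t_0)}W(t_0,x(t_0))+2^{p-1}c\int_{t_0}^t e^{-\lambda(t-s)}\bigl(v_E(s)^p+\rho(s)^p\bigr)\,ds .
\]
This makes the dependence on the equilibrium speed $v_E$ and on the residual $\rho$ explicit.

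The main technical point is justifying the a.e. product rule and the fundamental theorem of calculus for $u$. Both follow from local absolute continuity of $w$. No sign condition on $w$ is needed, so the argument carries no real obstacle.
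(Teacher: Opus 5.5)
Your proposal is correct and follows essentially the same integrating-factor argument as the paper: multiply by an exponential, use the a.e.\ product rule for the locally absolutely continuous product, integrate over $[t_0,t]$, and multiply back. The paper uses $e^{\lambda t}$ where you use $e^{\lambda(t-t_0)}$, which is immaterial; your Step~3 observation that $v_E\in L^p_{\loc}$ is implicitly needed when $p>1$, and the split via $(a+b)^p\le 2^{p-1}(a^p+b^p)$, are correct additions the paper does not spell out.
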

%%%%%%%%%%%%%%%%%%%%%%%%%%%%%%%
\begin{proof}
By assumption, the map $t\mapsto W(t,x(t))$ is locally absolutely continuous on $[t_0,\infty)$ and satisfies
\[
\frac{d}{dt}W(t,x(t))
\le
-\lambda W(t,x(t))+cq(t)^p
\]
for a.e. $t\ge t_0$. Since $q\in L^p_{\loc}([t_0,\infty))$, the function $q^p$ belongs to $L^1_{\loc}([t_0,\infty))$. Hence the right-hand side is locally integrable.
Multiplying the differential inequality by $e^{\lambda t}$ gives, for a.e. $t\ge t_0$,
\[
e^{\lambda t}\frac{d}{dt}W(t,x(t))
+\lambda e^{\lambda t}W(t,x(t))
\le
ce^{\lambda t}q(t)^p.
\]
Since $t\mapsto W(t,x(t))$ is locally absolutely continuous, so is
$t\mapsto e^{\lambda t}W(t,x(t))$, and
\[
\frac{d}{dt}\Bigl(e^{\lambda t}W(t,x(t))\Bigr)
=
e^{\lambda t}\frac{d}{dt}W(t,x(t))
+\lambda e^{\lambda t}W(t,x(t))
\]
for a.e. $t\ge t_0$. Therefore,
\[
\frac{d}{dt}\Bigl(e^{\lambda t}W(t,x(t))\Bigr)
\le
ce^{\lambda t}q(t)^p
\]
for a.e. $t\ge t_0$.
Integrating over $[t_0,t]$, with $t\ge t_0$, yields
\[
e^{\lambda t}W(t,x(t))
-e^{\lambda t_0}W(t_0,x(t_0))
\le
c\int_{t_0}^{t}e^{\lambda s}q(s)^p\,ds.
\]
Multiplying by $e^{-\lambda t}$ gives
\[
W(t,x(t))
\le
e^{-\lambda(t-t_0)}W(t_0,x(t_0))
+
c\int_{t_0}^{t}e^{-\lambda(t-s)}q(s)^p\,ds,
\]
which is precisely \eqref{eq:W-gronwall}. The final assertion follows by taking
$q(t)=v_E(t)+\rho(t)$ in \eqref{eq:W-gronwall}.
\end{proof}
%%%%%%%%%%%%%%%%%%%%%%%%%%%%%%%%%%
The estimate of Lemma~\ref{lem:comparison-aggregate-energy} controls the evolution of the Lyapunov energy under the combined action of dissipation and forcing. However, the energy--distance comparison \eqref{eq:energy-distance-comparison} is available only inside the tube $\mathcal T_r$. Therefore, before deriving quantitative tracking estimates, it is necessary to ensure that trajectories starting sufficiently close to the moving equilibrium family remain confined to a prescribed tracking tube. The next result provides such a confinement property under suitable smallness assumptions on the equilibrium drift and the residual perturbation.
%%%%%%%%%%%%%%%%%%%%%%%%%%%%%%%%%%%%
\begin{proposition}[Practical invariance of a moving tube]
\label{prop:moving-invariant-tube}
Assume that the energy--distance comparison
\eqref{eq:energy-distance-comparison} holds on the tube
$\mathcal T_{\bar r}$ for some $\bar r>0$, and let
$x(\cdot)$ be a solution of \eqref{P} satisfying
$\dot W(t)\le-\lambda W(t)+c\bigl(v_E(t)+\rho(t)\bigr)^p$
for a.e. $t\ge t_0$, where $\lambda>0$, $c\ge0$, and
$\rho\in L^\infty([t_0,\infty))$ is nonnegative.
Let $r\in(0,\bar r]$. Assume further that
\begin{equation}\label{eq:tube-smallness}
\left(\frac{c}{m\lambda}\right)^{1/p}
\bigl(\|v_E\|_\infty+\|\rho\|_\infty\bigr)
\le
\frac{r}{2},
\end{equation}
and
\begin{equation}\label{eq:tube-initial-distance}
\dist(t_0)
<
\frac12
\left(\frac{m}{M}\right)^{1/p}
r.
\end{equation}
Then
$(t,x(t))\in\mathcal T_r$ for all $t\ge t_0$.
\end{proposition}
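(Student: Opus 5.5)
We argue by a first-exit-time contradiction, as in the proof of Lemma~\ref{lem:tube-invariance}, with the Gr\"onwall bound of Lemma~\ref{lem:comparison-aggregate-energy} in place of plain monotonicity. Suppose the conclusion fails. Since $\dist(t_0)<\frac12(m/M)^{1/p}r<r$ and $t\mapsto\dist(t)$ is continuous along the solution, there is a first time $t_1>t_0$ with $\dist(t_1)=r$ and $\dist(t)<r$ on $[t_0,t_1)$. Then $(t,x(t))\in\mathcal T_r\subset\mathcal T_{\bar r}$ on $[t_0,t_1]$, so \eqref{eq:energy-distance-comparison} holds on this interval.

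Next we apply Lemma~\ref{lem:comparison-aggregate-energy} with $q=v_E+\rho$. Its hypothesis $q\in L^p_{\loc}$ follows from $v_E,\rho\in L^\infty$, and here we must take $v_E$ essentially bounded. This is implicit in \eqref{eq:tube-smallness}, since otherwise $\|v_E\|_\infty=\infty$ and the hypothesis is vacuous. For $t\in[t_0,t_1]$ we bound $q(s)^p\le(\|v_E\|_\infty+\|\rho\|_\infty)^p=:K^p$ and use $\int_{t_0}^t e^{-\lambda(t-s)}ds\le 1/\lambda$. This gives
\[
W(t,x(t))\le e^{-\lambda(t-t_0)}W(t_0,x(t_0))+\frac{c}{\lambda}K^p
\le M\,\dist(t_0)^p+\frac{c}{\lambda}K^p .
\]
By \eqref{eq:tube-initial-distance}, $M\dist(t_0)^p<m\,2^{-p}r^p$. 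By \eqref{eq:tube-smallness}, $\frac{c}{\lambda}K^p\le m\,2^{-p}r^p$. Hence $W(t_1,x(t_1))<2^{1-p}m\,r^p$.

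Finally, the lower bound in \eqref{eq:energy-distance-comparison} gives $W(t_1,x(t_1))\ge m\,r^p$, which requires $2^{1-p}\ge1$. That holds only for $p=1$, so here lies the expected obstacle: the two halves must combine with the $p$-th root, not additively at the level of $W$. The fix we would try first is to convert to distances before adding. From the bound above,
\[
m\,\dist(t)^p\le W(t,x(t))\le a^p+b^p,\qquad a:=\bigl(M\dist(t_0)^p\bigr)^{1/p},\quad b:=\Bigl(\tfrac{c}{\lambda}\Bigr)^{1/p}K .
\]
Since $p\ge1$, we have $(a^p+b^p)^{1/p}\le a+b$. Therefore
\[
\dist(t)\le (M/m)^{1/p}\dist(t_0)+(c/(m\lambda))^{1/p}K<\tfrac r2+\tfrac r2=r,
\]
using \eqref{eq:tube-initial-distance} and \eqref{eq:tube-smallness}. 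Taking $t=t_1$ contradicts $\dist(t_1)=r$, because the first term is strict. This completes the argument.

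Two checks remain. We note $m^{-1/p}(c/\lambda)^{1/p}=(c/(m\lambda))^{1/p}$, which matches \eqref{eq:tube-smallness} exactly. Applying Lemma~\ref{lem:comparison-aggregate-energy} on the whole half-line is harmless, since the estimate \eqref{eq:W-gronwall} at times $t\le t_1$ only uses the differential inequality on $[t_0,t]$.
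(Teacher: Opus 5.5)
Your argument is correct and is essentially the paper's proof: a first exit time, the Gr\"onwall bound of Lemma~\ref{lem:comparison-aggregate-energy}, and the inequality $a^p+b^p\le(a+b)^p$, which you use in the equivalent form $(a^p+b^p)^{1/p}\le a+b$. The ``obstacle'' you flag does not exist, however: the contradiction $m r^p\le W(t_1,x(t_1))<2^{1-p}m r^p$ needs $2^{1-p}\le1$, not $2^{1-p}\ge1$, and this holds for every $p\ge1$, so adding the two bounds at the energy level already closes the argument, exactly as the paper does.
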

%%%%%%%%%%%%%%%%%%%%%%%%%%%%%%%%%%
\begin{proof}
Suppose, by contradiction, that the conclusion fails. Since
\[
\dist(t_0)
<
\frac12
\left(\frac{m}{M}\right)^{1/p}r
<r,
\]
there exists a first time $t_*:=\inf\{t\ge t_0:\dist(t)=r\}.$
By continuity of $\dist(\cdot)$, we have
$\dist(t)<r$ for all $t\in[t_0,t_*)$, and therefore
$(t,x(t))\in\mathcal T_r\subset\mathcal T_{\bar r}$ for all
$t\in[t_0,t_*]$.

Since the energy--distance comparison
\eqref{eq:energy-distance-comparison} holds on $\mathcal T_{\bar r}$,
Lemma~\ref{lem:comparison-aggregate-energy} applies on $[t_0,t_*]$ and yields
\[
W(t_*)
\le
e^{-\lambda(t_*-t_0)}W(t_0)
+
c\int_{t_0}^{t_*}
e^{-\lambda(t_*-s)}
(v_E(s)+\rho(s))^p\,ds.
\]
Applying \eqref{eq:energy-distance-comparison} at $t=t_0$ and $t=t_*$, we obtain
\[
m\,\dist(t_*)^p
\le
M e^{-\lambda(t_*-t_0)}
\dist(t_0)^p
+
c\int_{t_0}^{t_*}
e^{-\lambda(t_*-s)}
(v_E(s)+\rho(s))^p\,ds.
\]
Using \eqref{eq:tube-initial-distance}, we have
\[
M e^{-\lambda(t_*-t_0)}
\dist(t_0)^p
<
m\left(\frac r2\right)^p.
\]
Moreover,
\[
\int_{t_0}^{t_*}
e^{-\lambda(t_*-s)}
(v_E(s)+\rho(s))^p\,ds
\le
\frac{(\|v_E\|_\infty+\|\rho\|_\infty)^p}{\lambda}.
\]
By \eqref{eq:tube-smallness}, we obtain
\[
c\int_{t_0}^{t_*}
e^{-\lambda(t_*-s)}
(v_E(s)+\rho(s))^p\,ds
\le
m\left(\frac r2\right)^p.
\]
Combining the previous estimates and using the inequality
$a^p+b^p\le (a+b)^p$ for $a,b\ge0$ and $p\ge1$, it follows that
\[
m\,\dist(t_*)^p
<
m\left(\frac r2+\frac r2\right)^p
=
m\,r^p.
\]
Hence $\dist(t_*)<r$, contradicting the definition of $t_*$. Therefore no exit time exists, and $\dist(t)<r$ for all $t\ge t_0$. Consequently, $(t,x(t))\in\mathcal T_r$ for all $t\ge t_0$.
\end{proof}
%%%%%%%%%%%%%%%%%%%%%%%%%%%%%%%%%%%
The previous results provide an explicit estimate for the evolution of the Lyapunov energy under the combined action of dissipation and external forcing. To obtain a tracking result, it remains to relate this energy estimate to the distance between the trajectory and the moving equilibrium family. Combining the energy--distance comparison \eqref{eq:energy-distance-comparison} with the forced dissipation estimate yields the main result of this section. The theorem below provides a quantitative bound on the tracking error in terms of the initial deviation from equilibrium, the equilibrium speed $v_E$, and the residual perturbation $\rho$. In particular, it shows how the dissipative mechanism of the system filters the motion of the equilibrium geometry through an exponentially weighted convolution estimate.
%%%%%%%%%%%%%%%%%%%%%%%%%%%%%%%%%%
\begin{theorem}[Quantitative tracking estimate]
\label{thm:quantitative-tracking}
Assume that the hypotheses of
Proposition~\ref{prop:moving-invariant-tube} hold for a solution
$x(\cdot)$ of \eqref{P}. Then, for every $t\ge t_0$. Then, for every $t\ge t_0$,
\begin{equation}\label{eq:tracking-estimate-main}
    \dist(t)
    \le
    \left[
    \frac{M}{m}e^{-\lambda(t-t_0)}\dist(t_0)^p
    +
    \frac{c}{m}
    \int_{t_0}^{t}
    e^{-\lambda(t-s)}
    \bigl(v_E(s)+\rho(s)\bigr)^p\,ds
    \right]^{1/p}.
\end{equation}
Consequently,
\begin{equation}\label{eq:tracking-estimate-split}
    \dist(t)
    \le
    \left(\frac{M}{m}\right)^{1/p}
    e^{-\lambda(t-t_0)/p}\dist(t_0)
    +
    \left(\frac{c}{m}\right)^{1/p}
    \left(
    \int_{t_0}^{t}
    e^{-\lambda(t-s)}
    \bigl(v_E(s)+\rho(s)\bigr)^p\,ds
    \right)^{1/p}.
\end{equation}
In particular, when $p=1$,
\begin{equation}\label{eq:tracking-estimate-p1}
    \dist(t)
    \le
    \frac{M}{m}e^{-\lambda(t-t_0)}\dist(t_0)
    +
    \frac{c}{m}
    \int_{t_0}^{t}
    e^{-\lambda(t-s)}
    \bigl(v_E(s)+\rho(s)\bigr)\,ds.
\end{equation}
\end{theorem}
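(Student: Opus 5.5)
The plan is to combine three earlier results: the confinement of Proposition~\ref{prop:moving-invariant-tube}, the Gr\"onwall-type bound of Lemma~\ref{lem:comparison-aggregate-energy}, and the two sides of \eqref{eq:energy-distance-comparison}.

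\textbf{Step 1: confinement.} Since the hypotheses of Proposition~\ref{prop:moving-invariant-tube} hold, we get $(t,x(t))\in\mathcal T_r\subset\mathcal T_{\bar r}$ for every $t\ge t_0$. Hence both sides of \eqref{eq:energy-distance-comparison} are available at every time along the trajectory. This is the only delicate point, since the comparison holds only on the tube, and the proposition exists precisely to settle it.

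\textbf{Step 2: Gr\"onwall bound.} We apply Lemma~\ref{lem:comparison-aggregate-energy} with $q=v_E+\rho$. The lemma requires $q\in L^p_{\loc}$. This holds because $\rho\in L^\infty$ and, by \eqref{eq:tube-smallness}, $\|v_E\|_\infty<\infty$, so $q$ is locally bounded. We obtain \eqref{eq:W-gronwall} for every $t\ge t_0$.

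\textbf{Step 3: convert energy to distance.} Bound $W(t)$ from below by $m\,\dist(t)^p$ and $W(t_0)$ from above by $M\,\dist(t_0)^p$. Dividing by $m$ and taking $p$-th roots gives \eqref{eq:tracking-estimate-main}.

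\textbf{Step 4: split the bound.} For $a,b\ge0$ and $p\ge1$ we use the subadditivity $(a+b)^{1/p}\le a^{1/p}+b^{1/p}$. This follows from concavity of $s\mapsto s^{1/p}$, or equivalently from the inequality $a^p+b^p\le(a+b)^p$ already used in the paper. Applying it with
\[
a=\tfrac{M}{m}e^{-\lambda(t-t_0)}\dist(t_0)^p
\]
and $b$ equal to the integral term yields \eqref{eq:tracking-estimate-split}. Here we note that $a^{1/p}=(M/m)^{1/p}e^{-\lambda(t-t_0)/p}\dist(t_0)$.

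\textbf{Step 5: the case $p=1$.} Setting $p=1$ in \eqref{eq:tracking-estimate-main} gives \eqref{eq:tracking-estimate-p1} directly.

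Once the tube invariance of Step 1 is in hand, no real obstacle remains; Steps 2 through 5 are routine bookkeeping.
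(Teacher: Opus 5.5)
Your proposal is correct and follows the paper's proof essentially step for step. The shared steps are: confinement to the tube via Proposition~\ref{prop:moving-invariant-tube}, then Lemma~\ref{lem:comparison-aggregate-energy} with $q=v_E+\rho$, then the upper bound of \eqref{eq:energy-distance-comparison} at $t_0$ and the lower bound at $t$, and finally subadditivity of $s\mapsto s^{1/p}$ for the split estimate. Your check that $q\in L^p_{\loc}$ follows from \eqref{eq:tube-smallness} is slightly more explicit than the paper's ``by assumption''; it requires $c>0$, but when $c=0$ the forcing term vanishes and the Gr\"onwall step needs no integrability of $q$.
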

%%%%%%%%%%%%%%%%%%%%%%%%%%%%%%%%%%%%
\begin{proof}
Let $t\ge t_0$ be fixed. By Proposition~\ref{prop:moving-invariant-tube},
$(s,x(s))\in\mathcal T_r$ for every $s\ge t_0$. Therefore, the
energy--distance comparison \eqref{eq:energy-distance-comparison}
applies along the trajectory on $[t_0,t]$.
Define $q(s):=v_E(s)+\rho(s)$ for all $s\ge t_0$.
By assumption, $q$ is nonnegative and belongs to
$L^p_{\loc}([t_0,\infty))$. Moreover, the forced dissipation estimate assumed in Proposition~\ref{prop:moving-invariant-tube} gives
\[
\frac{d}{ds}W(s,x(s))
\le
-\lambda W(s,x(s))+c q(s)^p
\]
for a.e. $s\in[t_0,t]$. Applying Lemma~\ref{lem:comparison-aggregate-energy}
on the interval $[t_0,t]$, we obtain
\[
W(t,x(t))
\le
e^{-\lambda(t-t_0)}W(t_0,x(t_0))
+
c\int_{t_0}^{t}
e^{-\lambda(t-s)}q(s)^p\,ds.
\]
Using the upper bound in \eqref{eq:energy-distance-comparison} at the
initial time gives
\[
W(t_0,x(t_0))
\le
M\,\dist(t_0)^p.
\]
Using the lower bound in \eqref{eq:energy-distance-comparison} at time
$t$ gives
\[
m\,\dist(t)^p
\le
W(t,x(t)).
\]
Combining the last three estimates yields
\[
m\,\dist(t)^p
\le
M e^{-\lambda(t-t_0)}\dist(t_0)^p
+
c\int_{t_0}^{t}
e^{-\lambda(t-s)}q(s)^p\,ds.
\]
Dividing by $m>0$ and recalling the definition of $q$ gives
\[
\dist(t)^p
\le
\frac{M}{m}e^{-\lambda(t-t_0)}\dist(t_0)^p
+
\frac{c}{m}
\int_{t_0}^{t}
e^{-\lambda(t-s)}
\bigl(v_E(s)+\rho(s)\bigr)^p\,ds.
\]
Taking the power $1/p$ gives \eqref{eq:tracking-estimate-main}.

It remains only to derive the split estimate
\eqref{eq:tracking-estimate-split}. Since $p\ge1$, the function
$a\mapsto a^{1/p}$ is subadditive on $[0,\infty)$, namely
$(a+b)^{1/p}\le a^{1/p}+b^{1/p}$ for all $a,b\ge0$. Applying this
inequality to the two nonnegative terms on the right-hand side of
\eqref{eq:tracking-estimate-main} yields
\[
\dist(t)
\le
\left(\frac{M}{m}\right)^{1/p}
e^{-\lambda(t-t_0)/p}\dist(t_0)
+
\left(\frac{c}{m}\right)^{1/p}
\left(
\int_{t_0}^{t}
e^{-\lambda(t-s)}
\bigl(v_E(s)+\rho(s)\bigr)^p\,ds
\right)^{1/p}.
\]
This proves \eqref{eq:tracking-estimate-split}. Finally, when $p=1$,
the preceding estimate reduces directly to
\eqref{eq:tracking-estimate-p1}.
\end{proof}
%%%%%%%%%%%%%%%%%%%%%%%%%%%%%%%%
\paragraph{Interpretation.}
Estimate \eqref{eq:tracking-estimate-main} provides a quantitative description of the mechanism governing the evolution of the tracking error. The first term on the right-hand side reflects the memory of the initial deviation from the moving equilibrium family and decays exponentially at rate $\lambda/p$ at the level of the tracking
error, corresponding to rate $\lambda$ at the energy level. The second term measures the cumulative effect of the motion of the equilibrium geometry and the residual perturbation through an exponentially weighted convolution. Consequently, disturbances that occurred far in the past have a diminishing influence on the current tracking error, while recent variations of the equilibrium family have a stronger effect.

The estimate shows that tracking is achieved through a balance between two competing mechanisms. On one hand, the dissipative dynamics continuously drive trajectories toward the equilibrium family. On the other hand, the motion of the family itself, measured by the equilibrium speed $v_E$, tends to generate tracking error. The convolution term quantifies precisely how these two effects interact over time.

A notable feature of \eqref{eq:tracking-estimate-main} is that the influence of the equilibrium motion enters explicitly through $v_E$. Thus, the estimate does not merely guarantee stability relative to a moving target, but also quantifies the accuracy with which the trajectory follows the evolving equilibrium geometry. This property forms the basis of the bounded-tracking, asymptotic-tracking, and robustness results established below.
%%%%%%%%%%%%%%%%%%%%%%%%%%%%%%%%%%
\begin{corollary}[Bounded drift implies bounded tracking error]
\label{cor:bounded-drift-tracking}
Assume that the hypotheses of Theorem~\ref{thm:quantitative-tracking}
hold and that
$v_E,\rho\in L^\infty([t_0,\infty)).$
Then, for every $t\ge t_0$,
\[
\dist(t)
\le
\left(\frac{M}{m}\right)^{1/p}
e^{-\lambda(t-t_0)/p}\dist(t_0)
+
\left(\frac{c}{m\lambda}\right)^{1/p}
\bigl(\|v_E\|_\infty+\|\rho\|_\infty\bigr).
\]
In particular, $\sup_{t\ge t_0}\dist(t)<\infty,$
and
\[
\limsup_{t\to\infty}\dist(t)
\le
\left(\frac{c}{m\lambda}\right)^{1/p}
\bigl(\|v_E\|_\infty+\|\rho\|_\infty\bigr).
\]
\end{corollary}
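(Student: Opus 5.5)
The plan is to start from the split tracking estimate \eqref{eq:tracking-estimate-split} of Theorem~\ref{thm:quantitative-tracking}. Its hypotheses are assumed, so it holds for every $t\ge t_0$. The first term is already in the required form. All the work therefore goes into bounding the convolution term uniformly in $t$, using the essential bounds on $v_E$ and $\rho$.

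\textbf{Bounding the convolution term.} Set $K:=\|v_E\|_\infty+\|\rho\|_\infty$. For a.e.\ $s\ge t_0$ we have $0\le v_E(s)+\rho(s)\le K$, hence $(v_E(s)+\rho(s))^p\le K^p$. This gives
\[
\int_{t_0}^{t}e^{-\lambda(t-s)}\bigl(v_E(s)+\rho(s)\bigr)^p\,ds
\le K^p\int_{t_0}^{t}e^{-\lambda(t-s)}\,ds
=\frac{K^p}{\lambda}\bigl(1-e^{-\lambda(t-t_0)}\bigr)\le\frac{K^p}{\lambda}.
\]
This is the same computation already used in the proof of Proposition~\ref{prop:moving-invariant-tube}. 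Taking the $1/p$ power and multiplying by $(c/m)^{1/p}$ bounds the second term of \eqref{eq:tracking-estimate-split} by $(c/(m\lambda))^{1/p}K$, which is the claimed estimate.

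\textbf{The two consequences.} The bound on $\sup_{t\ge t_0}\dist(t)$ follows immediately: $e^{-\lambda(t-t_0)/p}\le1$, so $\dist(t)\le (M/m)^{1/p}\dist(t_0)+(c/(m\lambda))^{1/p}K<\infty$ for all $t\ge t_0$. For the $\limsup$ statement, let $t\to\infty$ in the main estimate. The first term tends to zero because $\lambda>0$ and $\dist(t_0)$ is finite, and the second term is constant in $t$.

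\textbf{Main point to check.} None of the steps is a genuine obstacle. The only thing to verify is that the estimate holds for all $t\ge t_0$, that is, that the trajectory stays in the tube where \eqref{eq:energy-distance-comparison} is valid. This is already guaranteed by Proposition~\ref{prop:moving-invariant-tube}, which is part of the hypotheses of Theorem~\ref{thm:quantitative-tracking}. The almost-everywhere nature of the bound on $v_E+\rho$ causes no difficulty, since it is only used inside an integral.
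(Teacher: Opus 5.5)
Your proposal is correct and follows essentially the same route as the paper's proof. Both set $K:=\|v_E\|_\infty+\|\rho\|_\infty$, bound the convolution integral by $K^p/\lambda$ using $v_E+\rho\le K$ a.e., insert this into the split estimate \eqref{eq:tracking-estimate-split}, and read off the uniform bound and the $\limsup$ from the exponentially decaying first term.
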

%%%%%%%%%%%%%%%%%%%%%%%%%%%%%%%%%%%%%%%%%%%
\begin{proof}
Let $K:=\|v_E\|_\infty+\|\rho\|_\infty.$
Since $v_E,\rho\in L^\infty([t_0,\infty))$ and both functions are nonnegative, we have
\[
v_E(s)+\rho(s)\le K
\]
for a.e. $s\ge t_0$. Hence, for every $t\ge t_0$,
\[
\int_{t_0}^{t}
e^{-\lambda(t-s)}
\bigl(v_E(s)+\rho(s)\bigr)^p\,ds
\le
K^p\int_{t_0}^{t}e^{-\lambda(t-s)}\,ds.
\]
Since $\lambda>0$,
\[
\int_{t_0}^{t}e^{-\lambda(t-s)}\,ds
=
\frac{1-e^{-\lambda(t-t_0)}}{\lambda}
\le
\frac{1}{\lambda}.
\]
Therefore,
\[
\left(
\int_{t_0}^{t}
e^{-\lambda(t-s)}
\bigl(v_E(s)+\rho(s)\bigr)^p\,ds
\right)^{1/p}
\le
\lambda^{-1/p}K.
\]
Applying Theorem~\ref{thm:quantitative-tracking}, we obtain
\[
\dist(t)
\le
\left(\frac{M}{m}\right)^{1/p}
e^{-\lambda(t-t_0)/p}\dist(t_0)
+
\left(\frac{c}{m\lambda}\right)^{1/p}
\bigl(\|v_E\|_\infty+\|\rho\|_\infty\bigr)
\]
for every $t\ge t_0$. This proves the pointwise estimate. The uniform boundedness of $\dist(t)$ follows immediately. Finally, since
$e^{-\lambda(t-t_0)/p}\dist(t_0)\to0$ as $t\to\infty$, taking the upper limit as $t\to\infty$ gives
\[
\limsup_{t\to\infty}\dist(t)
\le
\left(\frac{c}{m\lambda}\right)^{1/p}
\bigl(\|v_E\|_\infty+\|\rho\|_\infty\bigr).
\]
The proof is complete.
\end{proof}
%%%%%%%%%%%%%%%%%%%%%%%%%%%%%%%%%%%%%%
The previous corollary shows that uniformly bounded equilibrium motion produces a uniformly bounded tracking error. A stronger conclusion can be obtained when the equilibrium speed and the residual perturbation have finite $p$-energy. In this case, the forcing term appearing in the tracking estimate becomes asymptotically negligible, and the dissipative mechanism dominates the long-time behavior. As a consequence, the trajectory not only remains close to the moving equilibrium family but eventually tracks it asymptotically. The next result provides a sufficient condition for exact asymptotic tracking.
%%%%%%%%%%%%%%%%%%%%%%%%%%%%%%%%%%%%%%%
\begin{corollary}[Integrable drift implies asymptotic tracking]
\label{cor:integrable-drift-tracking}
Assume that the hypotheses of Theorem~\ref{thm:quantitative-tracking}
hold on $[t_0,\infty)$ and that $v_E,\rho\in L^p([t_0,\infty)),$
where $p\ge1$ is the exponent appearing in
\eqref{eq:energy-distance-comparison}. Then
\[
\dist(t)\to0
\qquad\text{as }t\to\infty.
\]
\end{corollary}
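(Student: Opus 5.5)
The plan is to start from the pointwise estimate \eqref{eq:tracking-estimate-main} of Theorem~\ref{thm:quantitative-tracking} and show that both terms inside the bracket tend to zero. The first term, $\frac{M}{m}e^{-\lambda(t-t_0)}\dist(t_0)^p$, clearly vanishes as $t\to\infty$. Since $x\mapsto x^{1/p}$ is continuous at $0$, it therefore suffices to prove
\[
I(t):=\int_{t_0}^{t}e^{-\lambda(t-s)}q(s)^p\,ds\to0,\qquad q:=v_E+\rho .
\]

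First, note that $q\in L^p([t_0,\infty))$ by Minkowski's inequality. Hence $g:=q^p\in L^1([t_0,\infty))$; explicitly, $(a+b)^p\le 2^{p-1}(a^p+b^p)$, so $\int g\le 2^{p-1}(\|v_E\|_p^p+\|\rho\|_p^p)<\infty$.

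Next, I would show that the convolution of the exponential kernel with an integrable function vanishes at infinity by splitting the integral at $t/2$, or at $t_0+(t-t_0)/2$:
\[
I(t)\le e^{-\lambda(t-t_0)/2}\int_{t_0}^{\infty}g(s)\,ds+\int_{(t+t_0)/2}^{\infty}g(s)\,ds .
\]
On the first half, the kernel is at most $e^{-\lambda(t-t_0)/2}$. On the second half, the kernel is at most $1$. The first term decays exponentially, and the second is the tail of a convergent integral, so it tends to zero by dominated convergence or absolute continuity of the integral. Thus $I(t)\to0$, and hence $\dist(t)\to0$.

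The main point to check is that the hypotheses of Theorem~\ref{thm:quantitative-tracking} are indeed in force. These come through Proposition~\ref{prop:moving-invariant-tube}, which uses $L^\infty$ bounds on $v_E,\rho$ and the smallness condition \eqref{eq:tube-smallness} to keep the trajectory in the tube. Since the corollary assumes those hypotheses, the estimate \eqref{eq:tracking-estimate-main} holds for all $t\ge t_0$, and no further confinement argument is needed. The only genuinely analytic step is the tail-splitting bound for $I(t)$, which is routine.
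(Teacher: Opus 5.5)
Your proposal is correct and follows essentially the paper's argument: both show $g=(v_E+\rho)^p\in L^1([t_0,\infty))$ via $(a+b)^p\le 2^{p-1}(a^p+b^p)$, then make the exponentially weighted convolution vanish by splitting it into a piece where the kernel is small and a tail of $g$. The differences are cosmetic. The paper starts from \eqref{eq:tracking-estimate-split} and splits at a fixed $R$ with $\int_R^\infty g<\varepsilon/2$. You instead start from \eqref{eq:tracking-estimate-main}, use continuity of $a\mapsto a^{1/p}$ at $0$, and split at the moving midpoint $(t+t_0)/2$, which also gives an explicit bound.
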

%%%%%%%%%%%%%%%%%%%%%%%%%%%%%%%%%%%
\begin{proof}
By Theorem~\ref{thm:quantitative-tracking}, for every $t\ge t_0$,
\[
\dist(t)
\le
\left(\frac{M}{m}\right)^{1/p}
e^{-\lambda(t-t_0)/p}\dist(t_0)
+
\left(\frac{c}{m}\right)^{1/p}
\left(
\int_{t_0}^{t}
e^{-\lambda(t-s)}
\bigl(v_E(s)+\rho(s)\bigr)^p\,ds
\right)^{1/p}.
\]
The first term tends to zero as $t\to\infty$, since $\lambda>0$. It remains to show that the convolution term also tends to zero. Set $g(s):=\bigl(v_E(s)+\rho(s)\bigr)^p$.
Since $v_E,\rho\in L^p([t_0,\infty))$ and both functions are nonnegative, we have $g\in L^1([t_0,\infty))$. Indeed, by the convexity inequality $(a+b)^p\le 2^{p-1}(a^p+b^p)$ for $a,b\ge0$, we obtain
\[
g(s)
\le
2^{p-1}\bigl(v_E(s)^p+\rho(s)^p\bigr)
\]
for a.e. $s\ge t_0$, and the right-hand side is integrable on $[t_0,\infty)$.

We claim that
\[
\int_{t_0}^{t}
e^{-\lambda(t-s)}g(s)\,ds
\to0
\qquad\text{as }t\to\infty.
\]
Let $\varepsilon>0$ be fixed. Since $g\in L^1([t_0,\infty))$, there exists $R\ge t_0$ such that
\[
\int_R^\infty g(s)\,ds<\frac{\varepsilon}{2}.
\]
For $t\ge R$, split the convolution as
\[
\int_{t_0}^{t}
e^{-\lambda(t-s)}g(s)\,ds
=
\int_{t_0}^{R}
e^{-\lambda(t-s)}g(s)\,ds
+
\int_R^{t}
e^{-\lambda(t-s)}g(s)\,ds.
\]
For the second term, since $e^{-\lambda(t-s)}\le1$, we have
\[
\int_R^{t}
e^{-\lambda(t-s)}g(s)\,ds
\le
\int_R^\infty g(s)\,ds
<
\frac{\varepsilon}{2}.
\]
For the first term, if $t\ge R$, then
\[
\int_{t_0}^{R}
e^{-\lambda(t-s)}g(s)\,ds
=
e^{-\lambda t}
\int_{t_0}^{R}
e^{\lambda s}g(s)\,ds.
\]
The integral over $[t_0,R]$ is finite because $g\in L^1([t_0,R])$ and $e^{\lambda s}$ is bounded on $[t_0,R]$. Hence the first term tends to zero as $t\to\infty$. Thus, for all sufficiently large $t$,
\[
\int_{t_0}^{R}
e^{-\lambda(t-s)}g(s)\,ds
<
\frac{\varepsilon}{2}.
\]
Combining the two estimates gives
\[
\int_{t_0}^{t}
e^{-\lambda(t-s)}g(s)\,ds
<
\varepsilon
\]
for all sufficiently large $t$. Therefore the convolution term tends to zero. Consequently,
\[
\left(
\int_{t_0}^{t}
e^{-\lambda(t-s)}
\bigl(v_E(s)+\rho(s)\bigr)^p\,ds
\right)^{1/p}
\to0
\]
as $t\to\infty$. Since both terms in the tracking estimate tend to zero, we conclude that $\dist(t)\to0$ as $t\to\infty.$
\end{proof}
%%%%%%%%%%%%%%%%%%%%%%%%%%%%%%%%%%%%%%%
%%%%%%%%%%%%%%%%%%%%%%%%%%%%%%%%%%%%%%%%%%%%%%
\emph{Input-to-state stability} (ISS) is a fundamental robustness concept in nonlinear systems theory; see, e.g., \cite{sontag1989,sontag1995}. In its classical form, ISS quantifies the influence of external inputs on the deviation of a trajectory from a stable equilibrium. In the present setting, the reference object is no longer a fixed equilibrium but a moving equilibrium family. It is therefore natural to investigate how external perturbations affect the tracking error relative to the evolving equilibrium geometry. The next result shows that the tracking estimates developed above are robust with respect to additive inputs and provides an ISS-type bound in which the tracking error is controlled by the initial deviation, the equilibrium drift, and the magnitude of the perturbation.
%%%%%%%%%%%%%%%%%%%%%%%%%%%%%%%%%%%%%%%%%%%%%%

The following robustness estimate is stated along trajectories that remain in the tracking tube $\mathcal T_r$. Such tube-confinement holds, for instance, under suitable smallness assumptions on
$\|v_E\|_\infty$, $\|\rho\|_\infty$, and $\|u\|_\infty$, by an argument analogous to Proposition~\ref{prop:moving-invariant-tube}.
%%%%%%%%%%%%%%%%%%%%%%%%%%%%%%%%%%%%%%%%%%%%%%%
\begin{theorem}[ISS-type robustness]
\label{thm:ISS-robustness}
Consider the perturbed system
\begin{equation}\label{eq:perturbed-system-section3}
    \dot x(t)=f(t,x(t))+u(t),
\end{equation}
where $u\in L^p_{\loc}([t_0,\infty);\R^n)$ is an external input.
Assume that the corresponding solution remains in a tracking tube
$\mathcal T_r$ on which the energy--distance comparison
\eqref{eq:energy-distance-comparison} holds. Assume moreover that
$\rho\in L^p_{\loc}([t_0,\infty))$ is nonnegative and that, along the
solution of \eqref{eq:perturbed-system-section3}, the composite Lyapunov
function $W$ satisfies
\[
    \frac{d}{dt}W(t,x(t))
    \le
    -\lambda W(t,x(t))
    +c\bigl(v_E(t)+\rho(t)\bigr)^p
    +c_u\|u(t)\|^p
\]
for a.e. $t\ge t_0$, where $\lambda>0$, $c,c_u\ge0$, and $p\ge1$.
Then there exists a constant $C>0$, depending only on
$m,M,c,c_u,\lambda$ and $p$, such that, for every $t\ge t_0$,
\begin{align}\label{eq:ISS-distance-estimate}
    \dist(t)
    &\le
    C e^{-\lambda(t-t_0)/p}\dist(t_0) \notag\\
    &\quad+
    C\left(
    \int_{t_0}^{t}
    e^{-\lambda(t-s)}
    \bigl(v_E(s)+\rho(s)\bigr)^p\,ds
    \right)^{1/p}
    +C\|u\|_{L^\infty(t_0,t)} .
\end{align}
In particular, if $v_E,\rho\in L^\infty([t_0,\infty))$ and
$u\in L^\infty([t_0,\infty);\R^n)$, then
\begin{equation}\label{eq:ISS-limsup}
    \limsup_{t\to\infty}\dist(t)
    \le
    C\bigl(
    \|v_E\|_\infty+\|\rho\|_\infty+\|u\|_\infty
    \bigr).
\end{equation}
Moreover, if $v_E,\rho\in L^p([t_0,\infty))$ and
$u\in L^p([t_0,\infty);\R^n)$, then
\[
  \dist(t)\to0
    \qquad\text{as }t\to\infty.
\]
\end{theorem}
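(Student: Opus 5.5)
We plan to follow the proof of Theorem~\ref{thm:quantitative-tracking} almost verbatim, treating the input term as an additional forcing. First, we apply the Gr\"onwall argument of Lemma~\ref{lem:comparison-aggregate-energy}, i.e.\ multiply by $e^{\lambda t}$ and integrate, with the forcing $c(v_E+\rho)^p+c_u\|u\|^p$, which is locally integrable since $u\in L^p_{\loc}$. This yields
\[
W(t,x(t))\le e^{-\lambda(t-t_0)}W(t_0,x(t_0))+c\int_{t_0}^t e^{-\lambda(t-s)}\bigl(v_E(s)+\rho(s)\bigr)^p\,ds+c_u\int_{t_0}^t e^{-\lambda(t-s)}\|u(s)\|^p\,ds .
\]
Because the solution is assumed to stay in $\mathcal T_r$, we can then use \eqref{eq:energy-distance-comparison} at $t_0$ (upper bound) and at $t$ (lower bound). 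Dividing by $m$ gives $\dist(t)^p$ bounded by the sum of the three terms scaled by $M/m$, $c/m$, and $c_u/m$.

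Next, we bound the input convolution by $\|u\|_{L^\infty(t_0,t)}^p/\lambda$; when $u$ is not essentially bounded on $(t_0,t)$ the right-hand side is $+\infty$ and the estimate is trivial. Taking $p$-th roots and using subadditivity of $a\mapsto a^{1/p}$ over three nonnegative terms then gives \eqref{eq:ISS-distance-estimate} with
\[
C:=\max\bigl\{(M/m)^{1/p},(c/m)^{1/p},(c_u/(m\lambda))^{1/p},1\bigr\}.
\]
The trailing $1$ is included only to ensure $C>0$ when $c=c_u=0$.

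For \eqref{eq:ISS-limsup}, we repeat the argument of Corollary~\ref{cor:bounded-drift-tracking}: the middle term is bounded by $\lambda^{-1/p}(\|v_E\|_\infty+\|\rho\|_\infty)$, and the first term tends to $0$. Enlarging $C$ by the factor $\max\{1,\lambda^{-1/p}\}$, which still depends only on the allowed constants, gives the claimed bound.

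For the final asymptotic claim, the $L^\infty$ bound on $u$ is useless, so we return to the un-simplified inequality displayed above rather than to \eqref{eq:ISS-distance-estimate}. Both convolution terms have integrands in $L^1([t_0,\infty))$: for the first this uses $(a+b)^p\le 2^{p-1}(a^p+b^p)$, and for the second $\|u\|^p\in L^1$ by assumption. The splitting argument from the proof of Corollary~\ref{cor:integrable-drift-tracking} (a tail beyond $R$, plus a head killed by $e^{-\lambda t}$) therefore shows that each convolution tends to $0$, hence $\dist(t)\to0$.

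The main subtlety is exactly this point: the stated estimate \eqref{eq:ISS-distance-estimate}, with an $L^\infty$ input term, does not by itself yield convergence under $u\in L^p$. The proof must go through the sharper convolution form of the input term.
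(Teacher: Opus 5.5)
Your proposal is correct and follows the paper's proof essentially step by step: the weighted Gr\"onwall bound with the additional forcing $c_u\|u\|^p$, the energy--distance comparison (upper bound at $t_0$, lower bound at $t$), subadditivity of $a\mapsto a^{1/p}$ over the three terms, the bound $\|u\|_{L^\infty(t_0,t)}^p/\lambda$ on the input convolution, and, for the last claim, the splitting argument of Corollary~\ref{cor:integrable-drift-tracking} applied to both convolutions in their unsimplified form, which is also what the paper does. Your explicit choice of $C$, and its enlargement by $\max\{1,\lambda^{-1/p}\}$ for the $\limsup$ bound, spells out what the paper covers with ``after increasing the constant $C>0$ if necessary.''
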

%%%%%%%%%%%%%%%%%%%%%%%%%%%%%%%%%%%%%%%%%%%%%%%%
\begin{proof}
Let $t\ge t_0$ be fixed. By assumption, the solution remains in the tracking tube $\mathcal T_r$, where the energy--distance comparison \eqref{eq:energy-distance-comparison} holds. Hence \eqref{eq:energy-distance-comparison} applies along the trajectory on $[t_0,t]$.

By assumption, the map $t\mapsto W(t,x(t))$ is locally absolutely continuous on $[t_0,\infty)$ and satisfies
\[
\frac{d}{dt}W(t,x(t))
\le
-\lambda W(t,x(t))
+c\bigl(v_E(t)+\rho(t)\bigr)^p
+c_u\|u(t)\|^p
\]
for a.e. $t\ge t_0$. Applying the same weighted Gronwall argument as in Lemma~\ref{lem:comparison-aggregate-energy}, we obtain
\[
W(t,x(t))
\le
e^{-\lambda(t-t_0)}W(t_0,x(t_0))
+c\int_{t_0}^{t}e^{-\lambda(t-s)}(v_E(s)+\rho(s))^p\,ds
+c_u\int_{t_0}^{t}e^{-\lambda(t-s)}\|u(s)\|^p\,ds.
\]
Applying \eqref{eq:energy-distance-comparison} at times $t_0$ and $t$, we get
\[
m\,\dist(t)^p
\le
M e^{-\lambda(t-t_0)}\dist(t_0)^p
+c\int_{t_0}^{t}e^{-\lambda(t-s)}(v_E(s)+\rho(s))^p\,ds
+c_u\int_{t_0}^{t}e^{-\lambda(t-s)}\|u(s)\|^p\,ds.
\]
Dividing by $m$ and taking the power $1/p$, then using the subadditivity of
$a\mapsto a^{1/p}$ on $[0,\infty)$, gives
\[
\dist(t)\le A_1(t)+A_2(t)+A_3(t),
\]
where
\[
A_1(t):=
\left(\frac{M}{m}\right)^{1/p}
e^{-\lambda(t-t_0)/p}\dist(t_0),
\qquad
A_2(t):=
\left(\frac{c}{m}\right)^{1/p}
\left(
\int_{t_0}^{t}
e^{-\lambda(t-s)}
(v_E(s)+\rho(s))^p\,ds
\right)^{1/p},
\]
and
\[
A_3(t):=
\left(\frac{c_u}{m}\right)^{1/p}
\left(
\int_{t_0}^{t}
e^{-\lambda(t-s)}
\|u(s)\|^p\,ds
\right)^{1/p}.
\]
Moreover,
\[
A_3(t)
\le
\left(\frac{c_u}{m}\right)^{1/p}
\|u\|_{L^\infty(t_0,t)}
\left(
\int_{t_0}^{t}
e^{-\lambda(t-s)}\,ds
\right)^{1/p}
\le
\left(\frac{c_u}{m\lambda}\right)^{1/p}
\|u\|_{L^\infty(t_0,t)}.
\]
Therefore,
\[
\dist(t)
\le
A_1(t)+A_2(t)
+
\left(\frac{c_u}{m\lambda}\right)^{1/p}
\|u\|_{L^\infty(t_0,t)}.
\]
This yields \eqref{eq:ISS-distance-estimate}, after increasing the constant $C>0$ if necessary.

Assume now that $v_E,\rho\in L^\infty([t_0,\infty))$ and $u\in L^\infty([t_0,\infty);\R^n)$. Then
\[
\int_{t_0}^{t}e^{-\lambda(t-s)}(v_E(s)+\rho(s))^p\,ds
\le
\frac{(\|v_E\|_\infty+\|\rho\|_\infty)^p}{\lambda}.
\]
Taking the upper limit as $t\to\infty$ in \eqref{eq:ISS-distance-estimate}, and using
$e^{-\lambda(t-t_0)/p}\dist(t_0)\to0$, gives
\[
\limsup_{t\to\infty}\dist(t)
\le
C\bigl(\|v_E\|_\infty+\|\rho\|_\infty+\|u\|_\infty\bigr).
\]
This proves \eqref{eq:ISS-limsup}.

It remains to prove the asymptotic assertion. Assume that $v_E,\rho\in L^p([t_0,\infty))$ and $u\in L^p([t_0,\infty);\R^n)$. Then $(v_E+\rho)^p\in L^1([t_0,\infty))$ by the inequality $(a+b)^p\le 2^{p-1}(a^p+b^p)$ for $a,b\ge0$, and $\|u(\cdot)\|^p\in L^1([t_0,\infty))$ by assumption. By the same argument as in the proof of Corollary~\ref{cor:integrable-drift-tracking},
\[
\int_{t_0}^{t}e^{-\lambda(t-s)}(v_E(s)+\rho(s))^p\,ds\to0
\quad\text{and}\quad
\int_{t_0}^{t}e^{-\lambda(t-s)}\|u(s)\|^p\,ds\to0.
\]
Together with the exponential decay of the initial term, the preceding estimate implies that $\dist(t)\to0$ as $t\to\infty$, which completes the proof.
\end{proof}
%%%%%%%%%%%%%%%%%%%%%%%%%%%%%%%%%%%%%%%%%%%%%%%%%
\section{Convergence Rates and Limiting Equilibrium Geometry}
\label{sec:rates-limiting-geometry}

The results of Section~\ref{sec:stability-tracking} show how the energy--distance comparison \eqref{eq:energy-distance-comparison} can be combined with dissipative estimates for the composite Lyapunov function in order to derive stability, tracking, and robustness properties relative to the moving equilibrium family $\{E(t)\}_{t\ge t_0}$. We now investigate a more general nonlinear dissipation framework that yields explicit convergence rates for the tracking error and, subsequently, allows us to relate these rates to
the long-time geometry of the equilibrium family itself.

Throughout this section, we assume that the
\emph{energy--distance comparison}~\eqref{eq:energy-distance-comparison} holds on a tracking tube $\mathcal T_r$. We consider the nonlinear dissipation inequality
\begin{equation}
\label{eq:nonlinear-decay-model}
\dot W(t,x(t))
\le
-a\,W(t,x(t))^\theta+\rho(t)
\end{equation}
for a.e. $t\ge t_0$, where $a>0$, $\theta>0$, and
$\rho\ge0$ is a locally integrable perturbation term.

The exponent $\theta$ determines the qualitative decay mechanism of the comparison inequality. The case $\theta=1$ corresponds to the exponential regime and yields exponential convergence rates, whereas the regime $\theta>1$ leads to polynomial decay and therefore to polynomial convergence rates. The cases $\theta=1$ and $\theta>1$ are treated separately throughout this section; no continuity of the resulting estimates as $\theta\to1^+$ is claimed, since the two regimes correspond to qualitatively different comparison mechanisms.

The case $0<\theta<1$ is excluded from the main results; it is
addressed in Remark~\ref{rem:finite-time-theta} below. The objective of the first part of this section is to derive convergence rates for the tracking error from \eqref{eq:nonlinear-decay-model}. These rates will then be combined with the asymptotic evolution of the equilibrium family in order to obtain convergence results relative to a limiting equilibrium set.

The following estimates are understood along trajectories that remain in the tracking tube $\mathcal T_r$, so that~\eqref{eq:energy-distance-comparison} is applicable.
%%%%%%%%%%%%%%%%%%%%%%%%%%%%%%%%%%%%%%%%%%%%
\begin{theorem}[General convergence rates]
\label{thm:general-rate}

Assume that \eqref{eq:energy-distance-comparison} holds on a tracking
tube $\mathcal T_r$. Let $x(\cdot)$ be a solution of \eqref{P} satisfying
$(t,x(t))\in\mathcal T_r$ for all $t\ge t_0$, and assume that
\begin{equation}
\label{eq:rate-theorem-assumption}
\dot W(t,x(t))
\le
-a\,W(t,x(t))^\theta
\end{equation}
for a.e. $t\ge t_0$, where $a>0$ and $\theta>0$.
\begin{enumerate}
\item[\rm(i)]
If $\theta=1$, then
\[
W(t,x(t))
\le
W(t_0,x(t_0))
e^{-a(t-t_0)},
\qquad t\ge t_0.
\]
Consequently,
\[
\dist(t)
\le
\left(\frac{M}{m}\right)^{1/p}
e^{-a(t-t_0)/p}
\dist(t_0),
\qquad t\ge t_0.
\]

\item[\rm(ii)]
If $\theta>1$, then
\[
W(t,x(t))
\le
\Bigl(
W(t_0,x(t_0))^{1-\theta}
+
a(\theta-1)(t-t_0)
\Bigr)^{-1/(\theta-1)},
\qquad t\ge t_0.
\]
Consequently, there exists a constant
$C=C(a,\theta,m,M,W(t_0,x(t_0)))>0$
such that
\[
\dist(t)
\le
C(1+t-t_0)^{-1/[p(\theta-1)]},
\qquad t\ge t_0.
\]
\end{enumerate}
In both cases, the estimate for the tracking error is obtained from~\eqref{eq:energy-distance-comparison} by using the upper bound at the initial time $t_0$ and the
lower bound at time $t$.
\end{theorem}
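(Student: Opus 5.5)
Write $w(t):=W(t,x(t))$, which is locally absolutely continuous and nonnegative. The plan is to handle the two cases by scalar comparison arguments for $w$, and only at the end convert to the tracking error via \eqref{eq:energy-distance-comparison}. This conversion is legitimate because $(t,x(t))\in\mathcal T_r$ for all $t\ge t_0$.

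\textbf{Case $\theta=1$.} Apply Lemma~\ref{lem:comparison-aggregate-energy} with $\lambda=a$ and $c=0$, which gives $w(t)\le e^{-a(t-t_0)}w(t_0)$. Then chain the bounds
\[
m\,\dist(t)^p\le w(t)\le e^{-a(t-t_0)}M\,\dist(t_0)^p .
\]
Dividing by $m$ and taking $p$-th roots gives the stated bound.

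\textbf{Case $\theta>1$.} If $w(t_0)=0$, monotonicity gives $w\equiv0$ and there is nothing to prove. Otherwise set $T^*:=\inf\{t\ge t_0: w(t)=0\}$. Since $w$ is nonincreasing ($\dot w\le0$), we have $w=0$ on $[T^*,\infty)$, so the bound holds trivially there. On $[t_0,T^*)$ we have $w>0$, and the function $\phi(t):=w(t)^{1-\theta}$ is locally absolutely continuous: on compact subintervals $w$ is bounded away from zero, and $s\mapsto s^{1-\theta}$ is $C^1$ there. By the chain rule,
\[
\dot\phi=(1-\theta)w^{-\theta}\dot w\ge(\theta-1)a .
\]
The sign flips because $1-\theta<0$ and $\dot w\le -a w^\theta$. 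Integrating gives $\phi(t)\ge w(t_0)^{1-\theta}+a(\theta-1)(t-t_0)$, and raising to the negative power $-1/(\theta-1)$ reverses the inequality, producing the stated energy bound.

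For the rate, set $w_0:=w(t_0)$ and let $\kappa:=\min\{w_0^{1-\theta},a(\theta-1)\}$. Then
\[
w_0^{1-\theta}+a(\theta-1)(t-t_0)\ge\kappa(1+t-t_0),
\]
so $w(t)\le\kappa^{-1/(\theta-1)}(1+t-t_0)^{-1/(\theta-1)}$. Combining with $m\,\dist(t)^p\le w(t)$ gives
\[
\dist(t)\le\bigl(\kappa^{-1/(\theta-1)}/m\bigr)^{1/p}(1+t-t_0)^{-1/[p(\theta-1)]}.
\]
The constant depends on $a,\theta,m$ and $w_0$. The dependence on $M$ enters only through $w_0\le M\,\dist(t_0)^p$, in line with the statement's remark about using the upper bound at $t_0$.

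The only delicate point is the chain-rule step for $w^{1-\theta}$ when $w$ may reach zero. This is handled by the first-hitting-time splitting above, together with the observation that $w$ is nonincreasing, so $w=0$ persists after $T^*$.
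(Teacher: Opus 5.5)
Your proposal is correct and follows the paper's proof essentially step for step. You use Gronwall for $\theta=1$ (here via Lemma~\ref{lem:comparison-aggregate-energy} with $c=0$), and for $\theta>1$ the chain rule for $W^{1-\theta}$ on the interval where $W>0$, with $W\equiv0$ persisting afterward; your first-hitting-time justification of absolute continuity and the explicit choice $\kappa=\min\{w_0^{1-\theta},a(\theta-1)\}$ simply make precise two points the paper leaves implicit.
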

%%%%%%%%%%%%%%%%%%%%%%%%%%%%%%%%%%%%%%%%%%
\begin{proof}

\emph{Proof of {\rm(i)}.}
Assume that $\theta=1$. By \eqref{eq:rate-theorem-assumption} and Gronwall's inequality,
\[
W(t,x(t))
\le
W(t_0,x(t_0))
e^{-a(t-t_0)},
\qquad t\ge t_0.
\]
Using the upper bound in~\eqref{eq:energy-distance-comparison} at
$t=t_0$ and the lower bound in~\eqref{eq:energy-distance-comparison}
at time $t$, we obtain
\[
m\,\dist(t)^p
\le
M\,\dist(t_0)^p e^{-a(t-t_0)}.
\]
Taking the power $1/p$ yields
\[
\dist(t)
\le
\left(\frac{M}{m}\right)^{1/p}
e^{-a(t-t_0)/p}
\dist(t_0),
\]
which proves {\rm(i)}.

\smallskip

\noindent
\smallskip

\noindent
\emph{Proof of {\rm(ii)}.}
Assume that $\theta>1$. Since
$\dot W(t,x(t))\le-aW(t,x(t))^\theta\le0$, the map
$t\mapsto W(t,x(t))$ is nonincreasing. If
$W(t_0,x(t_0))=0$, then the nonnegativity of $W$ implies that
$W(t,x(t))=0$ for all $t\ge t_0$, and the conclusion follows.
Assume therefore that $W(t_0,x(t_0))>0$. On every interval on which
$W(t,x(t))>0$, the chain rule gives
\[
\frac{d}{dt}\Bigl(W(t,x(t))^{1-\theta}\Bigr)
=
(1-\theta)W(t,x(t))^{-\theta}\dot W(t,x(t))
\ge
a(\theta-1)
\]
for a.e. $t$ in that interval. Hence, as long as
$W(t,x(t))>0$, integration over $[t_0,t]$ yields
\[
W(t,x(t))^{1-\theta}
\ge
W(t_0,x(t_0))^{1-\theta}
+a(\theta-1)(t-t_0).
\]
If $W$ reaches zero, it remains zero because $W\ge0$ and is
nonincreasing, so the desired estimate remains valid. Consequently,
for every $t\ge t_0$,
\[
W(t,x(t))
\le
\Bigl(
W(t_0,x(t_0))^{1-\theta}
+a(\theta-1)(t-t_0)
\Bigr)^{-1/(\theta-1)}.
\]
Applying the lower bound in~\eqref{eq:energy-distance-comparison} at
time $t$, we deduce that
\[
\dist(t)
\le
m^{-1/p}
\Bigl(
W(t_0,x(t_0))^{1-\theta}
+
a(\theta-1)(t-t_0)
\Bigr)^{-1/[p(\theta-1)]}.
\]
Since
\[
W(t_0,x(t_0))^{1-\theta}
+
a(\theta-1)(t-t_0)
\ge
\kappa(1+t-t_0)
\]
for some constant
$\kappa=\kappa(a,\theta,W(t_0,x(t_0)))>0$, there exists a constant
$C=C(a,\theta,m,W(t_0,x(t_0)))>0$
such that
\[
\dist(t)
\le
C(1+t-t_0)^{-1/[p(\theta-1)]},
\qquad t\ge t_0.
\]
This proves {\rm(ii)}.
\end{proof}
%%%%%%%%%%%%%%%%%%%%%%%%%%%%%%%%%%%%%%%%%%%%%%
\begin{remark}
\label{rem:finite-time-theta}
The case $0<\theta<1$ is excluded from the main results. Indeed, the
comparison equation $\dot y=-a y^\theta$
reaches zero in finite time, leading to finite-time decay of the
associated Lyapunov energy. Consequently, under suitable assumptions,
one may obtain finite-time tracking estimates. Since the focus of the
present paper is on asymptotic tracking, convergence rates, and limiting
equilibrium geometry, this regime is not pursued further.
\end{remark}
%%%%%%%%%%%%%%%%%%%%%%%%%%%%%%%%%%%%%%%%%%%
We now consider perturbations of the nonlinear dissipation inequality. The following corollaries show how the convergence properties of Theorem~\ref{thm:general-rate} persist under various decay assumptions on the perturbation term $\rho$. We first establish qualitative convergence for integrable perturbations and then derive explicit asymptotic estimates when $\rho$ decays exponentially or polynomially.
%%%%%%%%%%%%%%%%%%%%%%%%%%%%%%%%%%%%%%%%%%%%%%%%
\begin{corollary}[Integrable perturbations]
\label{cor:integrable-perturbations}
Assume that \eqref{eq:energy-distance-comparison} holds on
$\mathcal T_r$, and let $x(\cdot)$ be a solution of \eqref{P} satisfying
$(t,x(t))\in\mathcal T_r$ for all $t\ge t_0$. Assume moreover that
\[
\dot W(t,x(t))
\le
-a\,W(t,x(t))^\theta+\rho(t)
\]
for a.e. $t\ge t_0$, where $a>0$, $\theta\ge1$, and
$\rho\in L^1(t_0,\infty)$ is nonnegative. Then
\[
W(t,x(t))\to0
\quad\text{and}\quad
\dist(t)\to0
\qquad\text{as }t\to\infty.
\]
\end{corollary}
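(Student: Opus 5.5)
Write $w(t):=W(t,x(t))$. The plan is to show first that $w$ has a limit, then that this limit must be zero, and finally to convert $w\to0$ into $\dist(t)\to0$ using the lower bound in \eqref{eq:energy-distance-comparison}.

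\emph{Existence of the limit.} Since $w\ge0$ and $\dot w\le-a w^\theta+\rho\le\rho$, the function
\[
u(t):=w(t)-\int_{t_0}^t\rho(s)\,ds
\]
is nonincreasing. It is bounded below by $-\|\rho\|_{L^1}$. Hence $u(t)$ converges, and because $\int_{t_0}^t\rho\,ds$ converges as well, $w(t)\to\ell\ge0$. In particular $w$ is bounded.

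\emph{The limit is zero.} Integrating the differential inequality over $[t_0,T]$ gives
\[
a\int_{t_0}^T w(s)^\theta\,ds\le w(t_0)+\int_{t_0}^\infty\rho(s)\,ds,
\]
exactly as in Corollary~\ref{cor:integral-dissipation}. So $w^\theta\in L^1(t_0,\infty)$. If $\ell>0$, then $w(s)^\theta\ge(\ell/2)^\theta$ for all large $s$, which contradicts integrability. Therefore $\ell=0$.

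This argument needs no uniform continuity (no Barbalat-type argument), because the existence of the limit is obtained separately from monotonicity of $u$. It works for every $\theta>0$, and in particular for $\theta\ge1$. The one point to state carefully is the absolute continuity of $w$, which is assumed in the framework and makes $u$ well defined as a nonincreasing absolutely continuous function.

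\emph{Conclusion for the tracking error.} Since the trajectory stays in $\mathcal T_r$, the lower bound in \eqref{eq:energy-distance-comparison} gives $m\,\dist(t)^p\le w(t)$. Hence $\dist(t)\le (w(t)/m)^{1/p}\to0$.

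I expect no real obstacle. The only subtle step is the existence of the limit of $w$, which is handled by the monotone auxiliary function $u$ rather than by any oscillation argument.
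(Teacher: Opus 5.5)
Your proof is correct and follows essentially the paper's argument: your auxiliary function $w(t)-\int_{t_0}^t\rho(s)\,ds$ differs from the paper's $W(t,x(t))+\int_t^\infty\rho(s)\,ds$ only by the constant $\int_{t_0}^\infty\rho(s)\,ds$. The remaining steps — integrability of $w^\theta$ forcing $\ell=0$, then the lower bound in \eqref{eq:energy-distance-comparison} — are the same as in the paper, and your remark that $\theta\ge1$ is never actually used is accurate.
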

%%%%%%%%%%%%%%%%%%%%%%%%%%%%%%%%%%%%%%%%%%%%%%%%%
\begin{proof}
Define $R(t):= \displaystyle\int_t^\infty \rho(s)\,ds.$
Since $\rho\in L^1(t_0,\infty)$,
we have $R(t)\to0$ as $t\to\infty$, and $R$ is locally absolutely
continuous with $\dot R(t)=-\rho(t)$ for a.e. $t\ge t_0$. Hence
\[
\frac{d}{dt}\Bigl(W(t,x(t))+R(t)\Bigr)
=
\dot W(t,x(t))-\rho(t)
\le
-a\,W(t,x(t))^\theta
\le0
\]
for a.e. $t\ge t_0$. Thus
$W(t,x(t))+R(t)$ is nonincreasing and nonnegative,
and therefore has a finite limit as $t\to\infty$. Since $R(t)\to0$,
it follows that $W(t,x(t))$ also has a finite limit, say $\ell\ge0$.

Integrating the differential inequality over $[t_0,T]$ gives
\[
a\int_{t_0}^T W(t,x(t))^\theta\,dt
\le
W(t_0,x(t_0))-W(T,x(T))
+\int_{t_0}^T \rho(t)\,dt
\le
W(t_0,x(t_0))
+\int_{t_0}^\infty \rho(t)\,dt.
\]
Letting $T\to\infty$, we obtain
$W(\cdot,x(\cdot))^\theta\in L^1(t_0,\infty)$. Since
$W(t,x(t))\to\ell$, this is possible only if $\ell=0$. Hence
$W(t,x(t))\to0$ as $t\to\infty$.
Finally, by the lower bound in~\eqref{eq:energy-distance-comparison},
\[
m\,\dist(t)^p
\le
W(t,x(t)).
\]
Therefore,
$\dist(t)\to0$ as $t\to\infty$. This completes the proof.
\end{proof}
%%%%%%%%%%%%%%%%%%%%%%%%%%%%%%%%%%%%%%%%%%%%
\begin{corollary}[Exponentially decaying perturbations]
\label{cor:exponential-perturbations}
Assume that \eqref{eq:energy-distance-comparison} holds on
$\mathcal T_r$, and let $x(\cdot)$ be a solution of \eqref{P} satisfying
$(t,x(t))\in\mathcal T_r$ for all $t\ge t_0$. Assume moreover that
\[
\dot W(t,x(t))
\le
-a\,W(t,x(t))+\rho(t)
\]
for a.e. $t\ge t_0$, where $a>0$, and that
$0\le\rho(t)\le Ce^{-\mu t}$ for a.e. $t\ge t_0$, for some constants
$C,\mu>0$. Then there exist constants $K,\lambda>0$ such that
\[
W(t,x(t))
\le
Ke^{-\lambda t}
\quad\text{and}\quad
\dist(t)
\le
Ke^{-\lambda t/p}
\]
for all $t\ge t_0$, where $\lambda=\min\{a,\mu\}$ if $a\neq\mu$,
whereas any $\lambda\in(0,a)$ may be chosen if $a=\mu$.
\end{corollary}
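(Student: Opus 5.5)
The plan is to apply the variation-of-constants bound from Lemma~\ref{lem:comparison-aggregate-energy} with $p=1$, $c=1$, $q=\rho$ and $\lambda=a$, and then compute the convolution explicitly. This gives, for every $t\ge t_0$,
\[
W(t,x(t))\le e^{-a(t-t_0)}W(t_0,x(t_0))+C\int_{t_0}^t e^{-a(t-s)}e^{-\mu s}\,ds .
\]
The first term equals $\bigl(e^{at_0}W(t_0,x(t_0))\bigr)e^{-at}$, and $e^{-at}\le e^{-\lambda t}$ for any $\lambda\le a$. The whole argument therefore reduces to bounding the convolution integral $I(t):=e^{-at}\int_{t_0}^t e^{(a-\mu)s}\,ds$.

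I will split into cases according to the sign of $a-\mu$.
\begin{itemize}
\item If $a>\mu$, then $I(t)\le e^{-at}e^{(a-\mu)t}/(a-\mu)=e^{-\mu t}/(a-\mu)$.
\item If $a<\mu$, then $I(t)\le e^{-at}e^{(a-\mu)t_0}/(\mu-a)$, which is a constant times $e^{-at}$.
\end{itemize}
In both cases the bound is a constant times $e^{-\min\{a,\mu\}t}$. The resonant case $a=\mu$ is the only delicate point, since there $I(t)=(t-t_0)e^{-at}$. For any $\lambda\in(0,a)$ I will use $\sup_{t\ge t_0}(t-t_0)e^{-(a-\lambda)t}<\infty$, which gives $I(t)\le K_\lambda e^{-\lambda t}$. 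This explains why only $\lambda<a$ is claimed in that case. Collecting the constants yields $W(t,x(t))\le Ke^{-\lambda t}$.

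For the tracking error, I use the lower bound in~\eqref{eq:energy-distance-comparison}, which is valid because the trajectory stays in $\mathcal T_r$. It gives $\dist(t)\le (W(t,x(t))/m)^{1/p}\le (K/m)^{1/p}e^{-\lambda t/p}$. Enlarging $K$ to $\max\{K,(K/m)^{1/p}\}$ gives both stated inequalities with a single constant.

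I expect no step to be a real obstacle. The only care needed is the $a=\mu$ case, and noting that $K$ depends on $t_0$ through $e^{at_0}$ because the bound is stated in absolute time $t$ rather than $t-t_0$.
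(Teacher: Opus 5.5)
Your proposal is correct and follows essentially the same route as the paper. You use the variation-of-constants bound (from Lemma~\ref{lem:comparison-aggregate-energy} with exponent $1$), then estimate $e^{-at}\int_{t_0}^t e^{(a-\mu)s}\,ds$ explicitly, handling the resonant case $a=\mu$ by giving up an arbitrarily small amount of the rate, and pass to $\dist(t)$ via the lower bound in \eqref{eq:energy-distance-comparison}; your case split for $a>\mu$ and $a<\mu$ simply fills in the computation the paper asserts in one line.
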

%%%%%%%%%%%%%%%%%%%%%%%%%%%%%%%%%%%%%%%%%%%%
\begin{proof}
By assumption,
\[
\dot W(t,x(t))
\le
-a\,W(t,x(t))+\rho(t)
\le
-a\,W(t,x(t))+Ce^{-\mu t}
\]
for a.e. $t\ge t_0$. The variation-of-constants estimate gives
\[
W(t,x(t))
\le
e^{-a(t-t_0)}W(t_0,x(t_0))
+
C\int_{t_0}^{t}e^{-a(t-s)}e^{-\mu s}\,ds.
\]
If $a\neq\mu$, then
\[
\int_{t_0}^{t}e^{-a(t-s)}e^{-\mu s}\,ds
\le
C_1 e^{-\min\{a,\mu\}t}
\]
for some constant $C_1>0$. If $a=\mu$, the integral equals
$(t-t_0)e^{-at}$ and is bounded above by $C_1 e^{-\lambda t}$ for every
$\lambda\in(0,a)$. Thus, after possibly replacing $\lambda$ by any
number in $(0,\min\{a,\mu\})$ in the resonant case $a=\mu$, there exists
$K_1>0$ such that
\[
W(t,x(t))
\le
K_1e^{-\lambda t}
\]
for all $t\ge t_0$.
Using the lower bound in~\eqref{eq:energy-distance-comparison}, we have
\[
m\,\dist(t)^p
\le
W(t,x(t)).
\]
Hence
\[
\dist(t)
\le
m^{-1/p}K_1^{1/p}e^{-\lambda t/p}.
\]
Renaming the constant gives the asserted estimate.
\end{proof}
%%%%%%%%%%%%%%%%%%%%%%%%%%%%%%%%%%%%%%%%%%%%%%%
\begin{corollary}[Polynomially decaying perturbations]
\label{cor:polynomial-perturbations}
Assume that \eqref{eq:energy-distance-comparison} holds on
$\mathcal T_r$, and let $x(\cdot)$ be a solution of \eqref{P} satisfying
$(t,x(t))\in\mathcal T_r$ for all $t\ge t_0$. Assume moreover that
\[
\dot W(t,x(t))
\le
-aW(t,x(t))+\rho(t),
\qquad
0\le\rho(t)\le C(1+t)^{-r}
\]
for a.e. $t\ge t_0$, where $a,C>0$ and $r>1$. Then there exists
$K>0$ such that
\[
W(t,x(t))\le K(1+t)^{-r}
\quad\text{and}\quad
\dist(t)\le K(1+t)^{-r/p}
\]
for all $t\ge t_0$.
\end{corollary}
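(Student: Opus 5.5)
Since the trajectory is assumed to remain in $\mathcal T_r$, the plan is to work directly with the variation-of-constants estimate, exactly as in Lemma~\ref{lem:comparison-aggregate-energy} with $\lambda=a$, $c=1$, $p=1$ and $q=\rho$:
\[
W(t,x(t))\le e^{-a(t-t_0)}W(t_0,x(t_0))+C\int_{t_0}^t e^{-a(t-s)}(1+s)^{-r}\,ds .
\]
The first term decays exponentially. Since $e^{-a(t-t_0)}(1+t)^{r}$ is bounded on $[t_0,\infty)$, it is bounded by $K_0(1+t)^{-r}$. The whole task is therefore to show that the convolution integral $I(t):=\int_{t_0}^t e^{-a(t-s)}(1+s)^{-r}\,ds$ satisfies $I(t)\le K_1(1+t)^{-r}$.

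For $I(t)$ I split the integral at the midpoint $s=(t+t_0)/2$, or at $s=t/2$ once $t\ge 2t_0$; for bounded $t$ there is nothing to prove, since everything is bounded and $(1+t)^{-r}$ is bounded below on compacts.

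On the recent half $[t/2,t]$, I use $(1+s)^{-r}\le (1+t/2)^{-r}\le 2^{r}(1+t)^{-r}$. The remaining factor satisfies $\int e^{-a(t-s)}ds\le 1/a$, so this piece contributes at most $2^r a^{-1}(1+t)^{-r}$.

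On the early half $[t_0,t/2]$, I bound $e^{-a(t-s)}\le e^{-at/2}$ and $(1+s)^{-r}\le (1+t_0)^{-r}$, or just use integrability of $(1+s)^{-r}$ from $r>1$. This piece is then at most $C' e^{-at/2}$, which is again $\le K_2(1+t)^{-r}$ because exponentials beat polynomials. Combining the two halves gives $W(t,x(t))\le K(1+t)^{-r}$ for all $t\ge t_0$.

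Finally, the lower bound in \eqref{eq:energy-distance-comparison} gives $m\,\dist(t)^p\le W(t,x(t))$, so $\dist(t)\le (K/m)^{1/p}(1+t)^{-r/p}$. Renaming the constant yields the claim.

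The only real obstacle is the convolution estimate for $I(t)$, in particular getting the sharp rate $(1+t)^{-r}$ rather than something weaker. The midpoint splitting above is what I would try first. The key point is that on the recent half the weight $(1+s)^{-r}$ is comparable to $(1+t)^{-r}$, while the early half is exponentially damped. Uniformity of constants when $t_0$ is large needs a little care, but $K$ is allowed to depend on $t_0$, so this is harmless.
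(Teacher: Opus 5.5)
Your proposal is correct and matches the paper's proof essentially step for step: variation of constants, splitting the convolution at $t/2$ for $t\ge 2t_0$, bounding the early half by $O(e^{-at/2})$ and the recent half by $a^{-1}(1+t/2)^{-r}$, absorbing the exponential terms into $(1+t)^{-r}$, and then applying the lower bound in \eqref{eq:energy-distance-comparison}. Your explicit comparison $(1+t/2)^{-r}\le 2^r(1+t)^{-r}$ and your treatment of the compact range $t\in[t_0,2t_0]$ fill in two details that the paper's write-up leaves implicit.
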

%%%%%%%%%%%%%%%%%%%%%%%%%%%%%%%%%%%%%%%%%%%%%%%
\begin{proof}
By assumption,
\[
\dot W(t,x(t))
\le
-a\,W(t,x(t))+C(1+t)^{-r}
\]
for a.e. $t\ge t_0$. Applying the variation-of-constants estimate, we obtain
\[
W(t,x(t))
\le
e^{-a(t-t_0)}W(t_0,x(t_0))
+
C\int_{t_0}^{t}
e^{-a(t-s)}(1+s)^{-r}\,ds.
\]
For $t\ge 2t_0$, we split the integral as
\[
\int_{t_0}^{t}
e^{-a(t-s)}(1+s)^{-r}\,ds
=
\int_{t_0}^{t/2}
e^{-a(t-s)}(1+s)^{-r}\,ds
+
\int_{t/2}^{t}
e^{-a(t-s)}(1+s)^{-r}\,ds.
\]
For the first term,
\[
\int_{t_0}^{t/2}
e^{-a(t-s)}(1+s)^{-r}\,ds
\le
e^{-at/2}
\int_{t_0}^{t/2}(1+s)^{-r}\,ds
=
O(e^{-at/2}).
\]
For the second term,
\[
\int_{t/2}^{t}
e^{-a(t-s)}(1+s)^{-r}\,ds
\le
(1+t/2)^{-r}
\int_{t/2}^{t}e^{-a(t-s)}\,ds
\le
\frac1a(1+t/2)^{-r}.
\]
Since the exponential term decays faster than any polynomial, there
exists a constant $K_1>0$ such that the following estimate holds for
all sufficiently large $t$. By increasing $K_1$ if necessary, the
estimate holds for every $t\ge t_0$:
\[
e^{-at/2}\le C_2(1+t)^{-r}
\]
for all sufficiently large $t$. Therefore, there exists a constant
$C_1>0$ such that
\[
\int_{t_0}^{t}
e^{-a(t-s)}(1+s)^{-r}\,ds
\le
C_1(1+t)^{-r}.
\]
Therefore,
\[
W(t,x(t))
\le
e^{-a(t-t_0)}W(t_0,x(t_0))
+
CC_1(1+t)^{-r}.
\]
Since the exponential term decays faster than any polynomial, there
exists a constant $K_1>0$ such that
\[
W(t,x(t))
\le
K_1(1+t)^{-r},
\qquad t\ge t_0.
\]
Using the lower bound in~\eqref{eq:energy-distance-comparison}, we obtain
\[
m\,\dist(t)^p
\le
W(t,x(t))
\le
K_1(1+t)^{-r}.
\]
Hence
\[
\dist(t)
\le
m^{-1/p}K_1^{1/p}(1+t)^{-r/p}.
\]
Renaming the constant completes the proof.
\end{proof}
%%%%%%%%%%%%%%%%%%%%%%%%%%%%%%%%%%%%%%%%%%%%%
\begin{remark}
label{rem:theta-greater-one-perturbations}
The exponential and polynomial perturbation estimates above are stated
for the linear dissipation case $\theta=1$. The analysis of
\[
\dot W(t,x(t))
\le
-a\,W(t,x(t))^\theta
+
C(1+t)^{-r},
\qquad \theta>1,
\]
is more delicate because the decay induced by the nonlinear dissipation
and the decay of the perturbation may compete. The resulting asymptotic
rate depends on the relative size of $r$ and the intrinsic decay
exponent $1/(\theta-1)$, and the derivation of sharp estimates in this
regime is left for future work.
\end{remark}
%%%%%%%%%%%%%%%%%%%%%%%
\subsection{Limiting Equilibrium Geometry}
\label{subsec:limiting-equilibrium-geometry}

The convergence results obtained thus far describe the asymptotic behavior of trajectories relative to the moving equilibrium family $\{E(t)\}_{t\ge t_0}$. A natural question is whether the equilibrium family itself possesses a limiting geometric structure as $t\to\infty$. Such a property is essential if one wishes to replace tracking relative to the time-varying sets $E(t)$ by convergence toward a fixed limiting set.

Recall that $E(t)\to E_\infty$ in the Attouch--Wets sense if
\[
d_R^{\rm AW}(E(t),E_\infty)\to0
\qquad\text{as }t\to\infty
\]
for every $R>0$; equivalently, the distance functions
$\dist(\cdot,E(t))$ converge locally uniformly on $\R^n$ to
$\dist(\cdot,E_\infty)$. See
\cite[Theorem~4.5 and Corollary~4.7]{Rock} and
\cite{BeerDiConcilio1991,Beer1993}.

Under \eqref{HD}, if $v_E\in L^1(t_0,\infty)$, then, for every $R>0$,
the family $\{\dist(\cdot,E(t))\}_{t\ge t_0}$ is Cauchy uniformly on
$\overline{\B}(0,R)$. This provides the appropriate starting point for
constructing a limiting equilibrium set $E_\infty$.
%%%%%%%%%%%%%%%%%%%%%%%%%%%%%%%%%%%%%%%%%%%%%%%
\begin{proposition}[Existence of a limiting equilibrium set]
\label{prop:existence-limiting-set}
Assume that, for every $R>0$,
\[
d_R^{\rm AW}(E(t),E(s))
\le
\int_s^t v_E(\tau)\,d\tau,
\qquad t_0\le s\le t,
\]
where $v_E\in L^1(t_0,\infty)$. Assume moreover that there exists
$R_0>0$ such that
\[
E(t)\cap\overline{\B}(0,R_0)\neq\emptyset
\qquad\text{for all }t\ge t_0.
\]
Then there exists a nonempty closed set $E_\infty\subset\R^n$ such that
\[
d_R^{\rm AW}(E(t),E_\infty)\to0
\qquad\text{as }t\to\infty
\]
for every $R>0$. Equivalently, $E(t)\to E_\infty$ in the
Attouch--Wets sense.
\end{proposition}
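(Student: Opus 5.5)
The plan is to construct $E_\infty$ as the zero set of a limiting distance function and then identify that function as $\dist(\cdot,E_\infty)$.

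\emph{Step 1: the distance functions converge.} Write $\varphi_t:=\dist(\cdot,E(t))$. By \eqref{HD}, for $t_0\le s\le t$ and every $R>0$,
\[
\sup_{z\in\overline{\B}(0,R)}|\varphi_t(z)-\varphi_s(z)|\le\int_s^t v_E(\tau)\,d\tau,
\]
and the right-hand side tends to $0$ as $s,t\to\infty$ because $v_E\in L^1(t_0,\infty)$. Hence $\varphi_t$ converges uniformly on every ball to some function $\varphi:\R^n\to[0,\infty)$. Each $\varphi_t$ is $1$-Lipschitz, so $\varphi$ is $1$-Lipschitz. Passing to the limit also gives
\[
\sup_{\overline{\B}(0,R)}|\varphi_t-\varphi|\le\int_t^\infty v_E(\tau)\,d\tau.
\]

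\emph{Step 2: define $E_\infty$.} Set $E_\infty:=\{z:\varphi(z)=0\}$. It is closed because $\varphi$ is continuous. For nonemptiness, use the nonescape condition to pick $y_t\in E(t)\cap\overline{\B}(0,R_0)$. By compactness a sequence $y_{t_k}\to y$ with $\|y\|\le R_0$. Then
\[
\varphi(y)\le|\varphi(y)-\varphi_{t_k}(y)|+\varphi_{t_k}(y_{t_k})+\|y-y_{t_k}\|\to0,
\]
using the locally uniform convergence on $\overline{\B}(0,R_0)$ and the $1$-Lipschitz property of $\varphi_{t_k}$. So $y\in E_\infty$.

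\emph{Step 3 (the main obstacle): show $\varphi=\dist(\cdot,E_\infty)$.} Once this holds, $d_R^{\rm AW}(E(t),E_\infty)\to0$ is exactly Step 1.

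For the inequality $\varphi\le\dist(\cdot,E_\infty)$: $\varphi$ is $1$-Lipschitz and vanishes on $E_\infty$, so for any $e\in E_\infty$ we get $\varphi(z)\le\varphi(e)+\|z-e\|=\|z-e\|$.

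For the reverse inequality, fix $z$ and take nearest points $e_t\in E(t)$ with $\|z-e_t\|=\varphi_t(z)$. Since $\varphi_t(z)\le\|z\|+R_0+1$ for large $t$, the points $e_t$ lie in a fixed ball $\overline{\B}(0,R_1)$. Extract $e_{t_k}\to e$. As in Step 2, $\varphi(e)\le|\varphi(e)-\varphi_{t_k}(e)|+\|e-e_{t_k}\|\to0$, so $e\in E_\infty$. Therefore
\[
\dist(z,E_\infty)\le\|z-e\|=\lim_k\varphi_{t_k}(z)=\varphi(z).
\]

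The delicate point is this uniform localization of the near-minimizers, which is where the nonescape hypothesis is really needed beyond nonemptiness. With the identity $\varphi=\dist(\cdot,E_\infty)$ established, the Attouch--Wets convergence follows, and the equivalence statement is the standard characterization recalled above.
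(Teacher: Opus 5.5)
Your proposal is correct and follows the paper's proof essentially step for step: the Cauchy property of $\dist(\cdot,E(t))$ on balls, $E_\infty$ as the zero set of the $1$-Lipschitz limit, nonemptiness via the nonescape condition and compactness, and $\varphi=\dist(\cdot,E_\infty)$ via convergent nearest points, with the explicit bound $\sup_{\overline{\B}(0,R)}|\varphi_t-\varphi|\le\int_t^\infty v_E(\tau)\,d\tau$ as a small bonus. One correction to your closing commentary: the localization of the nearest points in Step 3 does not need the nonescape hypothesis, since $\varphi_t(z)\to\varphi(z)<\infty$ already bounds them (this is how the paper argues), so that hypothesis enters only through the nonemptiness of $E_\infty$.
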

%%%%%%%%%%%%%%%%%%%%%%%%%%%%%%%%%%%%%%%%%%%%
\begin{proof}
For every $R>0$ and $t\ge s\ge t_0$, the assumed estimate gives
\[
d_R^{\rm AW}(E(t),E(s))
\le
\int_s^\infty v_E(\tau)\,d\tau.
\]
Since $v_E\in L^1(t_0,\infty)$, the family
$\{\dist(\cdot,E(t))\}_{t\ge t_0}$ is Cauchy in
$C(\overline{\B}(0,R))$ for every $R>0$. Hence there exists a
nonnegative function $f:\R^n\to\R$ such that
\[
\dist(\cdot,E(t))\to f
\]
uniformly on every bounded subset of $\R^n$. Since each
$\dist(\cdot,E(t))$ is $1$-Lipschitz, so is $f$.

We show that the zero set
\[
E_\infty:=\{x\in\R^n:f(x)=0\}
\]
is nonempty. Choose $t_k\to\infty$ and
$x_k\in E(t_k)\cap\overline{\B}(0,R_0)$. By compactness, after passing
to a subsequence, $x_k\to x_\infty\in\overline{\B}(0,R_0)$. Since the
convergence of the distance functions is uniform on bounded sets,
\[
f(x_\infty)
\le
\bigl|f(x_\infty)-\dist(x_\infty,E(t_k))\bigr|
+\|x_\infty-x_k\|\to0.
\]
Thus $x_\infty\in E_\infty$, and $E_\infty$ is nonempty and closed.

It remains to verify that $f=\dist(\cdot,E_\infty)$. Since $f$ is
$1$-Lipschitz and vanishes on $E_\infty$,
\[
f(x)\le\dist(x,E_\infty)
\qquad\text{for every }x\in\R^n.
\]
Conversely, fix $x\in\R^n$ and choose $y_k\in E(t_k)$ such that
\[
\|x-y_k\|\le\dist(x,E(t_k))+\frac1k.
\]
The sequence $(y_k)$ is bounded because
$\dist(x,E(t_k))\to f(x)$. Passing to a subsequence, we may assume that
$y_k\to y$. Local uniform convergence of the distance functions gives
$f(y)=0$, so $y\in E_\infty$. Moreover,
\[
\dist(x,E_\infty)
\le\|x-y\|
=\lim_{k\to\infty}\|x-y_k\|
=f(x).
\]
Therefore $f=\dist(\cdot,E_\infty)$. Consequently, for every $R>0$,
\[
d_R^{\rm AW}(E(t),E_\infty)\to0
\qquad\text{as }t\to\infty,
\]
which proves the result.
\end{proof}
%%%%%%%%%%%%%%%%%%%%%%%%%%%%%%%%%%%%%%%%%%%%%%%%%%%%%
The preceding proposition provides the geometric counterpart to the
tracking results established in Section~\ref{sec:stability-tracking}.
Indeed, under the assumptions developed there, one obtains
$\dist(t)\to0$, while
Proposition~\ref{prop:existence-limiting-set} shows that
$E(t)\to E_\infty$ in the Attouch--Wets sense under~\eqref{HD} and the
integrability condition $v_E\in L^1(t_0,\infty)$. It is therefore
natural to ask whether convergence to the moving equilibrium family
can be transferred to convergence relative to the limiting equilibrium
set $E_\infty$. The next theorem shows that this is indeed the case for
bounded trajectories.
%%%%%%%%%%%%%%%%%%%%%%%%%%%%%%%%%%%%%
\begin{theorem}[Transfer to the limiting equilibrium set]
\label{thm:limiting-equilibrium-set}
Assume that $x(\cdot)$ is bounded, $\dist(t)\to0$, and
$E(t)\to E_\infty$ in the Attouch--Wets sense as $t\to\infty$. Then
\[
\dist(x(t),E_\infty)\to0
\qquad\text{as }t\to\infty.
\]
\end{theorem}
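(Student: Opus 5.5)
The plan is a direct triangle-inequality argument that uses the localized Attouch--Wets distance on a ball containing the trajectory. Since $x(\cdot)$ is bounded, fix $R>0$ with $\|x(t)\|\le R$ for all $t\ge t_0$. Then, for every $t\ge t_0$, the point $z=x(t)$ is admissible in the supremum defining $d_R^{\rm AW}$. We obtain
\[
\dist(x(t),E_\infty)
\le
\dist(x(t),E(t))
+\bigl|\dist(x(t),E_\infty)-\dist(x(t),E(t))\bigr|
\le
\dist(t)+d_R^{\rm AW}(E(t),E_\infty).
\]

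The first term tends to zero by the hypothesis $\dist(t)\to0$. The second tends to zero because $E(t)\to E_\infty$ in the Attouch--Wets sense, which by definition means $d_R^{\rm AW}(E(t),E_\infty)\to0$ for every fixed $R>0$, in particular for our $R$. Letting $t\to\infty$ gives the claim.

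There is no genuine obstacle. The only point requiring care is that the radius $R$ must be chosen uniformly in $t$, before passing to the limit. This is exactly where boundedness of the trajectory enters: without it, $x(t)$ could escape every ball, and local uniform convergence of the distance functions would give no control. I would also note in a remark that, combined with Proposition~\ref{prop:existence-limiting-set} and Corollary~\ref{cor:integrable-drift-tracking} or Corollary~\ref{cor:integrable-perturbations}, this yields convergence to $E_\infty$ for bounded solutions whenever $v_E\in L^1(t_0,\infty)$ and the nonescape condition holds.
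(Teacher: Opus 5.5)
Your proof is correct and follows the paper's argument: you fix $R$ using boundedness of the trajectory, apply the triangle inequality to obtain $\dist(x(t),E_\infty)\le \dist(t)+d_R^{\rm AW}(E(t),E_\infty)$, and let both terms vanish. Your point that $R$ must be fixed uniformly in $t$ before passing to the limit is exactly where boundedness enters in the paper's proof as well.
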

%%%%%%%%%%%%%%%%%%%%%%%%%%%%%%%%%%%%%%%%%%%%%%
\begin{proof}
Since $x(\cdot)$ is bounded, there exists $R>0$ such that
$x(t)\in\overline{\B}(0,R)$ for all $t\ge t_0$. Hence
\[
\begin{aligned}
\dist(x(t),E_\infty)
&\le
\dist(x(t),E(t))
+\bigl|\dist(x(t),E_\infty)-\dist(x(t),E(t))\bigr|\\
&\le
\dist(t)+d_R^{\rm AW}(E(t),E_\infty).
\end{aligned}
\]
The first term tends to zero by assumption, while the second tends to
zero by the Attouch--Wets convergence of $E(t)$ to $E_\infty$.
Therefore,
\[
\dist(x(t),E_\infty)\to0
\qquad\text{as }t\to\infty.
\]
\end{proof}
%%%%%%%%%%%%%%%%%%%%%%%%%%%%%%%%%%%%%
Theorem~\ref{thm:limiting-equilibrium-set} establishes convergence of
the trajectory to the limiting equilibrium set $E_\infty$. The next
result shows how quantitative estimates for the tracking error and the
Hausdorff convergence of the equilibrium family can be combined to
derive explicit convergence rates relative to $E_\infty$.
%%%%%%%%%%%%%%%%%%%%%%%%%%%%%%%%%%%%
\begin{theorem}[Transfer of convergence rates]
\label{thm:rate-transfer-limit}
Assume that $x(\cdot)$ is bounded and choose $R>0$ such that
$x(t)\in\overline{\B}(0,R)$ for all $t\ge t_0$. Suppose that, for all
sufficiently large $t$,
\[
d_R^{\rm AW}(E(t),E_\infty)\le\eta(t),
\]
where $\eta$ is nonnegative, and that
$\dist(t)\le r(t)$ for some nonnegative function $r$. Then
\[
\dist(x(t),E_\infty)
\le
\dist(t)+\eta(t)
\le
r(t)+\eta(t)
\]
for all sufficiently large $t$. Consequently,
\[
\dist(x(t),E_\infty)
=
O\bigl(r(t)+\eta(t)\bigr)
\qquad\text{as }t\to\infty.
\]
\end{theorem}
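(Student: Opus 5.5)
The plan is to reuse, almost verbatim, the triangle-type decomposition from the proof of Theorem~\ref{thm:limiting-equilibrium-set}, this time keeping the quantitative bounds instead of passing to limits. Fix $t$ large enough that both hypotheses hold. Since $x(t)\in\overline{\B}(0,R)$, the definition of $d_R^{\rm AW}$ applies with $z=x(t)$, giving
\[
\bigl|\dist(x(t),E_\infty)-\dist(x(t),E(t))\bigr|
\le d_R^{\rm AW}(E(t),E_\infty)\le \eta(t).
\]
This is the only point where boundedness of the trajectory enters. It is needed because the Attouch--Wets quantity only controls distance functions uniformly on the fixed ball $\overline{\B}(0,R)$, and $R$ must be chosen once and for all, independently of $t$.

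Next I will write
\[
\dist(x(t),E_\infty)\le \dist(x(t),E(t))+\bigl|\dist(x(t),E_\infty)-\dist(x(t),E(t))\bigr|
\le \dist(t)+\eta(t),
\]
using the definition \eqref{eq:tracking-error} of the tracking error. Inserting the hypothesis $\dist(t)\le r(t)$ gives the chain $\dist(x(t),E_\infty)\le\dist(t)+\eta(t)\le r(t)+\eta(t)$ for all sufficiently large $t$. The $O$-statement then follows directly from the definition of the Landau symbol, with constant $1$ and threshold equal to the time beyond which the $\eta$-bound (and the $r$-bound, if it too is only eventual) is valid.

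There is no genuine obstacle here. The only care needed is bookkeeping: the radius $R$ in the Attouch--Wets estimate must be the one containing the whole trajectory, and "sufficiently large $t$" must be taken as the maximum of the thresholds in the two hypotheses. Optionally, I will add a remark that under \eqref{HD} with $v_E\in L^1(t_0,\infty)$ one may take $\eta(t)=\int_t^\infty v_E(\tau)\,d\tau$. This follows by letting $s\to\infty$ in \eqref{HD}, swapping the roles of $s$ and $t$, and using the continuity of $d_R^{\rm AW}$ under the limit from Proposition~\ref{prop:existence-limiting-set}. Combined with the rates in Theorem~\ref{thm:general-rate} or the perturbation corollaries for $r(t)$, this makes the transferred rate explicit.
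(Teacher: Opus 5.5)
Your proposal is correct and follows the paper's proof essentially verbatim: evaluate the definition of $d_R^{\rm AW}$ at $z=x(t)\in\overline{\B}(0,R)$, apply the triangle-type decomposition $\dist(x(t),E_\infty)\le\dist(t)+d_R^{\rm AW}(E(t),E_\infty)$, and then insert the bounds $\eta(t)$ and $r(t)$. Your optional remark that one may take $\eta(t)=\int_t^\infty v_E(\tau)\,d\tau$ under \eqref{HD} with $v_E\in L^1(t_0,\infty)$ is also valid. It follows from the triangle inequality for $d_R^{\rm AW}$ together with letting the later time tend to infinity. The paper does not state this remark.
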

%%%%%%%%%%%%%%%%%%%%%%%%%%%%%%%%%%%%%%%%%%%%%%%%
\begin{proof}
Since $x(t)\in\overline{\B}(0,R)$ for all $t\ge t_0$, we have
\[
\begin{aligned}
\dist(x(t),E_\infty)
&\le
\dist(x(t),E(t))
+\bigl|\dist(x(t),E_\infty)-\dist(x(t),E(t))\bigr|\\
&\le
\dist(t)+d_R^{\rm AW}(E(t),E_\infty).
\end{aligned}
\]
Therefore, for all sufficiently large $t$,
\[
\dist(x(t),E_\infty)
\le
\dist(t)+\eta(t)
\le
r(t)+\eta(t).
\]
Consequently,
\[
\dist(x(t),E_\infty)
=
O\bigl(r(t)+\eta(t)\bigr)
\qquad\text{as }t\to\infty.
\]
\end{proof}
%%%%%%%%%%%%%%%%%%%%%%%%%%%%%%%%%%%%%%%%%%%%%%%%%%%%%%
\section{Examples and Application}
\label{sec:examples-applications}

In this section, we illustrate the main results developed throughout the paper. We first consider a scalar system with a moving equilibrium, which provides a simple setting in which the tracking estimates can be computed explicitly. We then study a dynamic resource allocation problem with time-varying demand, where the equilibrium set is a moving nonisolated affine manifold. This application demonstrates how the stability, tracking, convergence, and robustness results obtained in the previous sections can be applied to a concrete time-varying dynamical system.
%%%%%%%%%%%%%%%%%%%%%%%%%%%%%%%%%%%%%%%%%%%%%%%%
\subsection{A Scalar Moving Equilibrium}
\label{subsec:scalar-moving-equilibrium}

We begin with a simple scalar example for which all computations can be performed explicitly. Despite its simplicity, this example illustrates the main mechanisms underlying the theory developed in the previous sections, including the equilibrium speed, the tracking estimate, and the convergence to a limiting equilibrium.
Consider the scalar system
\[
\dot x=-(x-a(t)),
\]
where $a\in W^{1,1}_{\loc}([t_0,\infty))$. The associated equilibrium
family is $E(t)=\{a(t)\}$ and therefore $\dist(t)=|x(t)-a(t)|.$
Moreover,
\[
\dH(E(t),E(s))
=
|a(t)-a(s)|
\le
\int_s^t |\dot a(\tau)|\,d\tau,
\qquad t\ge s\ge t_0,
\]
so that assumption~\eqref{HD} holds with $v_E(t)=|\dot a(t)|$.

Setting $e(t):=x(t)-a(t)$, the error satisfies
\[
\dot e(t)=-e(t)-\dot a(t).
\]
Applying the variation-of-constants formula, we obtain
\[
e(t)
=
e^{-(t-t_0)}e(t_0)
-
\int_{t_0}^{t}
e^{-(t-s)}
\dot a(s)\,ds,
\]
and consequently
\[
|e(t)|
\le
e^{-(t-t_0)}|e(t_0)|
+
\int_{t_0}^{t}
e^{-(t-s)}
|\dot a(s)|\,ds.
\]
Since $\dist(t)=|e(t)|$ and $v_E(t)=|\dot a(t)|$, the latter estimate coincides with the tracking estimate of
Theorem~\ref{thm:quantitative-tracking} specialized to the present scalar setting.

\noindent\textbf{Bounded drift.}
If $\dot a\in L^\infty(t_0,\infty)$, then
$v_E\in L^\infty(t_0,\infty)$ and
Corollary~\ref{cor:bounded-drift-tracking} yields
\[
\limsup_{t\to\infty}|x(t)-a(t)|
\le
\|\dot a\|_\infty.
\]
\noindent\textbf{Integrable drift.}
If $\dot a\in L^1(t_0,\infty)$, then
$v_E\in L^1(t_0,\infty)$ and
Corollary~\ref{cor:integrable-drift-tracking} implies that
$x(t)-a(t)\to0$ as $t\to\infty$.

\noindent\textbf{Limiting equilibrium.}
Assume in addition that $\dot a\in L^1(t_0,\infty)$. Then
$a(t)$ admits a finite limit $a_\infty\in\R$ as $t\to\infty$. Defining
$E_\infty:=\{a_\infty\}$, we have $E(t)\to E_\infty$
in the Hausdorff metric. Since $x(t)-a(t)\to0$ by the previous paragraph,
Theorem~\ref{thm:limiting-equilibrium-set} yields
$\dist(x(t),E_\infty)\to0$ as $t\to\infty$.
Consequently, $x(t)\to a_\infty$.
%%%%%%%%%%%%%%%%%%%%%%%%%%%%%%%%%%%%%%%%%%%%%%%%%%%%%%%%%%%%
\subsection{Dynamic Resource Allocation}
\label{subsec:dynamic-resource-allocation}

We consider a dynamic resource allocation problem involving $n$ agents sharing a common resource. Let $x_i(t)$ denote the amount of resource allocated to agent $i$ at time $t$, and let $D(t)$ represent a time-varying demand that must be satisfied collectively. Such models arise naturally in power balancing problems, communication networks, cloud-computing infrastructures, and distributed load-sharing systems,
where the total demand evolves continuously over time. Resource allocation problems and their dynamical formulations have been studied extensively in economics, optimization, and control; see, e.g., \cite{Arrow1958,nedic2009,Cherukuri2016}.

A distinctive feature of these systems is that the set of feasible allocations is typically not a single operating point but a continuum of configurations satisfying the demand constraint. As the demand varies, this feasible set moves in the state space, producing a time-varying nonisolated equilibrium family. The objective is therefore not to converge to a fixed equilibrium, but rather to track a moving set of admissible operating states while maintaining robustness with respect to demand fluctuations and external disturbances.
%%%%%%%%%%%%%%%%%%%%%%%%%%%%%%%%%%%%%%%%%%%%%%%%%%%%%%%%%%%%

Let
$x(t)=(x_1(t),\ldots,x_n(t))^\top\in\R^n$,
let $D(t)\in\R$ denote the total demand, and let
$\mathbf 1=(1,\ldots,1)^\top\in\R^n$.
The quantity
\begin{equation}
\label{eq:resource-mismatch}
\sigma(t,x):=\mathbf 1^\top x-D(t)
\end{equation}
measures the deviation from the demand constraint, namely the difference between the total allocated resource and the required demand.
We consider the dynamics
\[
\dot x(t)=-a\,\sigma(t,x(t))+u(t),
\qquad
a=(a_1,\ldots,a_n)^\top,\qquad a_i>0,
\]
where $u(t)\in\R^n$ represents external disturbances or implementation errors.

In the absence of disturbances, that is, when $u\equiv0$, an equilibrium
configuration at time $t$ is characterized by the condition
$\dot x(t)=0.$
Since $a_i>0$ for every $i$, an equilibrium configuration satisfies
\[
\sigma(t,x)=0
\quad\Longleftrightarrow\quad
\mathbf 1^\top x=D(t).
\]
Therefore, the equilibrium family is given by
\[
E(t)
=
\left\{
x\in\R^n:
\mathbf 1^\top x=D(t)
\right\}.
\]
\noindent\textbf{Equilibrium set.}
For each fixed $t$, the set $E(t)$ is an affine hyperplane of
codimension one. In particular, for $n\ge2$ it is not a singleton but a continuum of feasible allocations satisfying the demand constraint. Moreover, since the demand $D(t)$ varies with time, the equilibrium family moves in the state space. This provides a concrete example of a time-varying nonisolated equilibrium family of the type studied throughout this paper.

The tracking error admits the explicit representation
\[
\dist(t)
=
\frac{|\mathbf 1^\top x(t)-D(t)|}{\sqrt n}
=
\frac{|\sigma(t,x(t))|}{\sqrt n}.
\]
\noindent\textbf{Tracking error.}
The tracking error is therefore completely characterized by the quantity $\sigma(t,x(t))$. In particular, controlling $\sigma(t,x(t))$ is equivalent to controlling $\dist(t)$.
%%%%%%%%%%%%%%%%%%%%%%%%%%%%%%%%%%%%%%%%%%%%%%%%%%%%%%%

By \eqref{eq:resource-mismatch}, define
$\sigma(t):=\sigma(t,x(t))$. Differentiating along trajectories yields
\[
\dot\sigma(t)
=
\mathbf 1^\top\dot x(t)-\dot D(t).
\]
Using the system dynamics, we obtain
\[
\mathbf 1^\top\dot x(t)
=
-\mathbf 1^\top a\,\sigma(t)
+
\mathbf 1^\top u(t).
\]
Define
$A:=\mathbf 1^\top a=\sum_{i=1}^n a_i>0$. Then
\[
\dot\sigma(t)
=
-A\sigma(t)-\dot D(t)+\mathbf 1^\top u(t).
\]
%%%%%%%%%%%%%%%%%%%%%%%%%%%%%%%%%%%%

The variation-of-constants formula gives
\[
\sigma(t)
=
e^{-A(t-t_0)}\sigma(t_0)
-
\int_{t_0}^{t}
e^{-A(t-s)}\dot D(s)\,ds
+
\int_{t_0}^{t}
e^{-A(t-s)}\mathbf 1^\top u(s)\,ds.
\]
Therefore,
\[
|\sigma(t)|
\le
e^{-A(t-t_0)}|\sigma(t_0)|
+
\int_{t_0}^{t}
e^{-A(t-s)}
\bigl(
|\dot D(s)|
+
|\mathbf 1^\top u(s)|
\bigr)\,ds.
\]
%%%%%%%%%%%%%%%%%%%%%%%%%
\noindent\textbf{Explicit tracking estimate.}
Using the identity
$\dist(t)=|\sigma(t)|/\sqrt n$, we obtain
\[
\dist(t)
\le
\frac{e^{-A(t-t_0)}}{\sqrt n}\,|\sigma(t_0)|
+
\frac1{\sqrt n}
\int_{t_0}^{t}
e^{-A(t-s)}
\bigl(
|\dot D(s)|
+
|\mathbf 1^\top u(s)|
\bigr)\,ds.
\]
%%%%%%%%%%%%%%%%%
This is the explicit counterpart of Theorem~\ref{thm:quantitative-tracking} for the present resource allocation model.
%%%%%%%%%%%%%%%%%%%%%%%

\noindent\textbf{Lyapunov interpretation.}
The quantity $\sigma(t,x)$ measures the deviation from the equilibrium
family and naturally induces the Lyapunov function
$W(t,x)=\frac12\sigma(t,x)^2$. Since
$\dist(x,E(t))=|\sigma(t,x)|/\sqrt n$, we have
\[
W(t,x)=\frac n2\dist(x,E(t))^2,
\]
so \eqref{eq:energy-distance-comparison} holds globally with
$p=2$ and $m=M=n/2$. Thus, the present model provides a direct
single-energy illustration of the tracking framework developed in
Section~\ref{sec:stability-tracking}.
%%%%%%%%%%%%%%%%%%%%%%%%%%%%%%%%%%%%%%%%

\noindent\textbf{Bounded demand variation.}
Since
$v_E(t)=|\dot D(t)|/\sqrt n$,
the demand variation determines the speed of the moving equilibrium
family. If $\dot D\in L^\infty(t_0,\infty)$ and
$u\in L^\infty(t_0,\infty)$, then
the preceding explicit estimate yields
\[
\limsup_{t\to\infty}\dist(t)
\le
\frac{1}{A\sqrt n}
\Bigl(
\|\dot D\|_\infty
+
\|\mathbf 1^\top u\|_\infty
\Bigr).
\]
%%%%%%%%%%%%%%%%%%%%%%%%%%%%%%%%%%%%%%%
\noindent\textbf{Integrable demand variation.}
If $\dot D,\mathbf 1^\top u\in L^1(t_0,\infty)$, then the preceding
convolution estimate yields $\dist(t)\to0$ as $t\to\infty$.
%%%%%%%%%%%%%%%%%%%%%%%%%%%%%%%%%%%%%%%%%%%%%%%%%%

\noindent\textbf{Limiting equilibrium.}
If $\dot D\in L^1(t_0,\infty)$, then $D(t)$ converges to some
$D_\infty\in\R$. Define
$E_\infty:=\{x\in\R^n:\mathbf 1^\top x=D_\infty\}$. Since
$d_H(E(t),E_\infty)=|D(t)-D_\infty|/\sqrt n\to0$, we have
\[
\dist(x(t),E_\infty)
\le
\dist(t)+\frac{|D(t)-D_\infty|}{\sqrt n}.
\]
Consequently, $\dist(t)\to0$ implies
$\dist(x(t),E_\infty)\to0$ as $t\to\infty$.
%%%%%%%%%%%%%%%%%%%%%%%%%%%%%%%%%%%%%

\noindent\textbf{Numerical illustration.}
The simulation uses $n=5$,
$a=(0.30,0.40,0.20,0.50,0.30)^\top$, and
\[
D(t)=10+2e^{-0.01t}\sin(0.30t)+1.5e^{-0.02t}.
\]
We compare the bounded persistent disturbance
$u_i(t)=0.08\sin(\omega_i t)$ with the vanishing disturbance
$u_i(t)=0.08e^{-0.025t}\sin(\omega_i t)$, where
$\omega=(0.5,0.8,1.1,1.4,1.7)^\top$. Figure~\ref{fig:demand-allocation}
shows the aggregate allocation under the vanishing disturbance, while
Figure~\ref{fig:tracking-error} compares the corresponding tracking
errors. The results exhibit the behavior predicted by the preceding
estimates.
%%%%%%%%%%%%%%%%%%%%%%%%%%%%%%%%%%%%%%%%%%%%%%%%
\begin{figure}[H]
\centering
\captionsetup{width=0.80\textwidth}
\includegraphics[width=0.85\textwidth]{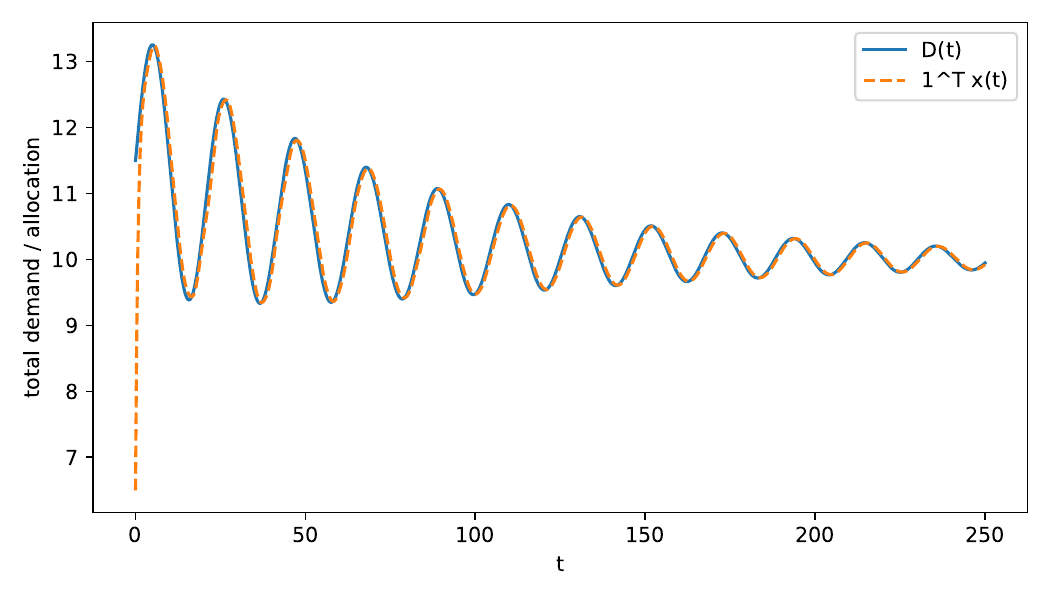}
\caption{Evolution of the demand $D(t)$ and the aggregate allocation $\mathbf 1^\top x(t)$. The aggregate allocation closely tracks the time-varying demand, and the gap between the two quantities decreases as the demand approaches its limiting value.}
\label{fig:demand-allocation}
\end{figure}
%%%%%%%%%%%%%%%%%%%%%%%%%%%%%%%%%%%%%%%%%%%%%%%
%%%%%%%%%%%%%%%%%%%%%%%%%%%%%%%%%%%%%%%%%%%%%%%%
\begin{figure}[H]
\centering
\captionsetup{width=0.80\textwidth}
\includegraphics[width=0.85\textwidth]{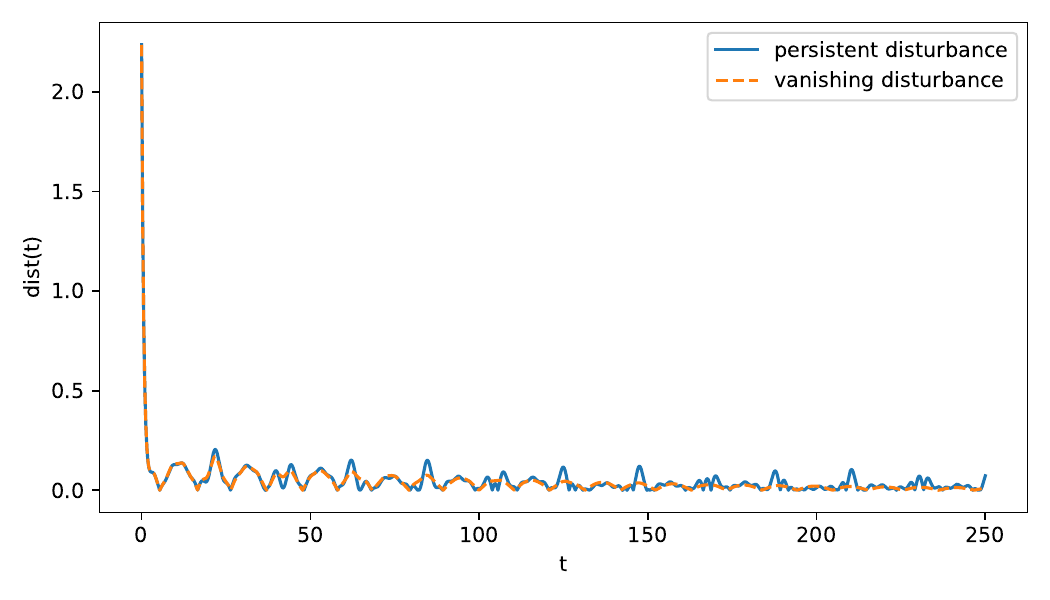}
\caption{Tracking error under persistent and vanishing disturbances.
The persistent disturbance produces a bounded tracking error that does
not converge to zero, whereas the error associated with the vanishing
disturbance converges to zero.}
\label{fig:tracking-error}
\end{figure}
%%%%%%%%%%%%%%%%%%%%%%%%%%%%%%%%%%%%%%%%%%%%%%%
%%%%%%%%%%%%%%%%%%%%%%%%%%%%%%%%%%%%%%%
\section{Conclusion}
\label{sec:conclusion}

We have studied the stability, tracking, and convergence properties of nonautonomous systems with moving nonisolated equilibrium sets. By combining Lyapunov methods with localized Attouch--Wets estimates on the equilibrium family, we obtained quantitative tracking bounds, asymptotic tracking results under integrable equilibrium drift, and robustness estimates with respect to external perturbations. We also established the existence of a limiting equilibrium geometry and showed how convergence relative to the moving equilibrium family can be transferred to convergence relative to a limiting equilibrium set. The theoretical results were illustrated through a dynamic resource allocation model with time-varying demand.

Future work will focus on extending the present framework to differential inclusions, sweeping processes, and systems governed by maximally monotone operators, where moving equilibrium structures arise naturally and may exhibit richer geometric behavior.
%%%%%%%%%%%%%%%%%%%%%%%%%%%%%%%%%%%%%%%%%%%%%%%%%%%%%%%%%%%%
\bibliographystyle{plain}
\bibliography{refs}
%%%%%%%%%%%%%%%%%%%%%%%%%%%%%%%%%%%%%%%%%%%%%%%%%%%%5%%
\end{document}